\newcommand{\RR}{\mathbb{R}}
\newcommand{\NN}{\mathbb{N}}
\newcommand{\Z}{\mathbb{Z}}
\newcommand{\CC}{\mathbb{C}}
\newcommand{\norm}{\|\cdot\|}
\newcommand{\dist}{\mathcal{E}^\prime}
\newcommand{\vertiii}[1]{{\vert\kern-0.25ex\vert\kern-0.25ex\vert #1 
    \vert\kern-0.25ex\vert\kern-0.25ex\vert}}
\newtheorem{prpstn}{Proposition}[section]
\newtheorem{lmm}{Lemma}[section]
\newtheorem{thrm}{Theorem}[section]
\newtheorem{dfntn}{Definition}[section]
\newtheorem{crllr}{Corollary}[section]
\newtheorem{rmrk}{Remark}[section]
\begin{document}
\title{Internal rapid stabilization of a 1-D linear transport equation with a scalar feedback}

\author{{C}hristophe {Z}hang}

\maketitle
\begin{abstract} We use the backstepping method to study the stabilization of a 1-D linear transport equation on the interval $(0,L)$, by controlling the scalar amplitude of a piecewise regular function of the space variable in the source term. We prove that if the system is controllable in a periodic Sobolev space of order greater than $1$, then the system can be stabilized exponentially in that space and, for any given decay rate, we give an explicit feedback law that achieves that decay rate.  \end{abstract}
\textbf{Keywords.} Backstepping, transport equation, Fredholm transformations, stabilization, rapid stabilization, internal control.

\section{Introduction}
We study the linear 1-D hyperbolic equation
\begin{equation}\label{GeneralSystem}
\left\{\begin{aligned}
y_t + y_x + a(x) y& = u(t) \tilde{\varphi} (x), \ x\in [0,L], \\
y(t,0) & = y(t,L), \ \forall t \geq 0,
\end{aligned}\right.
\end{equation}
where  $a$ is continuous, real-valued, $\tilde{\varphi}$ is a given real-valued function that will have to satisfy certain conditions, and at time $t$, $y(t, \cdot)$ is the state and $u(t)$ is the control. As the system can be transformed into
\begin{equation}\label{System}
\left\{\begin{aligned}
\alpha_t + \alpha_x + \mu \alpha & = u(t) \varphi (x), \ x\in [0,L], \\
\alpha(t,0) & = \alpha(t,L), \ \forall t \geq 0,
\end{aligned}\right.
\end{equation}
through the state transformation
$$\alpha(t,x) := e^{\int_0^x a (s)ds - \mu x} y(x,t),$$
where $\mu=\int_0^L a(s)ds$, and with
$$\varphi(x):= e^{\int_0^x a (s)ds- \mu x} \tilde{\varphi} (x),$$
we will focus on systems of the form \eqref{System} in this article.

{\color{black}These systems are an example of linear hyperbolic systems with a distributed scalar input. Such systems appear naturally in physical problems. For example, as is mentioned in \cite{Russell1}, a linear wave equation which can be rewritten as a $2\times 2$ first order hyperbolic system, the problem of a vibrating damped string, or the plucking of a string, can be modelled thus. In a different field altogether, chemical tubular reactors, in particular plug flow reactors (see \cite{OrlovDochain, Pitarch}), are modeled by hyperbolic systems with a distributed scalar input (the temperature of the reactor jacket), albeit with a boundary input instead of proportional boundary conditions. Let us cite also the water tank system, introduced by François Dubois, Nicolas Petit and Pierre Rouchon in \cite{DPR1999}. It models a 1-D tank containing an inviscid, incompressible, irrotational fluid, in the approximation that its acceleration is small compared with the gravitational constant, and that the height of the liquid is small compared with the length of the tank. In this setting, the motion of the fluid can be modelled by the Saint-Venant equations on the interval $[0,L]$ with impermeable boundary conditions (which correspond to proportional boundary conditions after a variable change), and the control is the force applied to the tank itself, which takes the form of a distributed scalar input. }

\subsection{Notations and definitions}
We note $\ell^2$ the space of summable square series $\ell^2 (\Z)$.
To simplify the notations, we will note $L^2$ the space $L^2(0,L)$ of complex-valued $L^2$ functions, with its hermitian product
\begin{equation}\label{ScalarProduct}
 \langle f, g \rangle = \int_0^L f(x)\overline{g(x)}dx, \quad \forall f, g\in L^2,
\end{equation}
and the associated norm $\|\cdot\|$.
We also use the following notation
\begin{equation}
e_n(x)=\frac1{\sqrt{L}} e^{\frac{2i\pi}{L} n x}, \quad \forall n \in \Z, 
\end{equation}
the usual Hilbert basis for $L^2$. For a function $f\in L^2$, we will note $(f_n) \in \ell^2$ its coefficients in this basis: 
$$f=\sum_{n \in \Z} f_n e_n.$$

Note that with this notation, we have
$$\bar{f}=\sum_{n \in \Z} \overline{f_{-n}} e_n,$$
so that, in particular, if $f$ is real-valued:
$$ f_{-n} = \overline{f_n}, \quad \forall n \in \Z.$$

Functions of $L^2$ can also be seen as $L$-periodic functions on $\RR$, by the usual $L$-periodic continuation: in this article, for any $f\in L^2$ we will also note $f$ its $L$-periodic continuation on $\RR$.

We will use the following definition of the convolution product on $L$-periodic functions:
\begin{equation}\label{Conv}
f \star g = \sum_{n\in \Z} f_n g_n e_n= \int_{0}^L f(s)g(\cdot -s) ds \in L^2,\quad \forall f, g\in L^2,  
\end{equation}
where $g(x-s)$ should be understood as the value taken in $x-s$ by the $L$-periodic continuation of $g$.

Let us now note $\mathcal{E}$ the space of finite linear combinations of the $(e_n)_{n \in \Z}$. Then, any sequence $(f_n)_{n \in \Z}$ defines an element $f$ of $\mathcal{E}^\prime$:
$$\langle e_n, f \rangle =\overline{f_n}.$$ 
On this space of linear forms, derivation can be defined by duality:
$$f^\prime= \left(\frac{2i\pi n}L f_n \right)_{n\in \Z}, \quad \forall f \in \dist.$$
We also define the following spaces:
\begin{dfntn}
Let $m \in \NN$. 
We note $H^m$ the usual Sobolev spaces on the interval $(0,L)$, equipped with the Hermitian product
$$ \langle f, g\rangle_m = \int_0^L \partial^m f \overline{\partial^m g}+ f \overline{g}, \quad \forall f,g \in H^m, $$
and the associated norm $\|\cdot\|_m$.

For $m\geq1$ we also define $H^m_{(pw)}$ the space of piecewise $H^m$ functions, that is, $f\in H^m_{(pw)}$ if there exists a finite number $d$ of points $(\sigma_j)_{1 \leq j \leq d} \in [0, L]$ such that, noting $\sigma_0:=0$ and $\sigma_{d+1}:=L$, $f$ is $H^m$ on every $[\sigma_j, \sigma_{j+1}]$ for $0\leq j \leq d$.
This space can be equipped with the norm 
$$\|f\|_{m, pw}:=\sum_{j=0}^d \|f_{|[\sigma_j, \sigma_{j+1}]} \|_{H^m(\sigma_j, \sigma_{j+1})}.$$

For $s>0$, we also define the periodic Sobolev space $H^s_{per}$ as the subspace of $L^2$ functions $f=\sum_{n \in \Z} f_n e_n$ such that
$$\sum_{n \in \Z} \left(1+\left|\frac{2i\pi n}{L}\right|^{2s}\right)|f_n|^2 < \infty. $$
$H^s$ is a Hilbert space, equipped with the Hermitian product
$$ \langle f, g\rangle_s = \sum_{n \in \Z} \left(1+\left|\frac{2i\pi n}{L}\right|^{2s}\right) f_n \overline{g_n}, \quad \forall f,g \in H^s,$$
and the associated norm $\|\cdot\|_s$, as well as the Hilbert basis $$(e_n^s):=\left(\frac{e_n}{\sqrt{1+\left|\frac{2i\pi n}{L}\right|^{2s}}}\right).$$
\end{dfntn}

Note that for $m \in \NN$, $H^m_{per}$ is a closed subspace of $H^m$, with the same scalar product and norm, thanks to the Parseval identity. Moreover,
$$H^m_{per}=\left\{f\in H^m, \quad {\color{black} f^{(i)}(0)=f^{(i)}(L), \ \forall i \in \{0, \cdots, m-1\}}\right\}.$$

\subsection{Main result}

To stabilize \eqref{System}, we will be considering linear feedbacks of the form
$$\langle  \alpha(t), F \rangle = \sum_{n\in \Z} \overline{F_n} \alpha_n(t)=\int_{0}^L \bar{F}(s) \alpha(s)ds$$
where $F\in \dist$ and $(F_n) \in \CC^\Z$ are its Fourier coefficients, and $F$ is real-valued, that is,
$$ F_{-n} = \overline{F_n}, \quad \forall n \in \Z.$$
In fact, the integral notation will appear as purely formal, as the $(F_n)$ will have a prescribed growth, so that $F \notin L^2$. The associated closed-loop system now writes
\begin{equation}\label{ClosedLoop}
\left\{\begin{aligned}
\alpha_t + \alpha_x +\mu \alpha & = \langle \alpha (t), F \rangle \varphi (x), \ x\in [0,L], \\
\alpha(t,0) & = \alpha(t,L), \ \forall t \geq 0.
\end{aligned}\right.
\end{equation}
This is a linear transport equation, which we seek to stabilize with an internal, scalar feedback, given by a real-valued feedback law. This article aims at proving the following class of stabilization results:
\begin{thrm}[Rapid stabilization in Sobolev norms]\label{MainResult}
Let $m \geq 1$. Let $\varphi \in H^m_{(pw)} \cap H^{m-1}_{per}$ such that
\begin{equation}\label{phiGrowth}
 \frac{c}{\sqrt{1+\left|\frac{2i \pi n}{L}\right|^{2m}}} \leq |\varphi_n| \leq \frac{C}{\sqrt{1+\left|\frac{2i \pi n}{L}\right|^{2m}}}, \quad \forall n \in \Z,
\end{equation}
where $c,C >0$ are the optimal constants for these inequalities.
Then, for every $\lambda {\color{black}>} 0$, {\color{black}for all $\alpha_0 \in H^{m}_{per}$ the closed-loop system \eqref{ClosedLoop} with the stationary feedback law $F\in \dist$ given by
\begin{equation*}
\langle e_n, F \rangle:=-\frac{1-e^{\-\lambda L}}{1+e^{-\lambda L}}\frac{2}{L\overline{\varphi_n}}, \quad \forall n \in \Z,
\end{equation*}
has a solution $\alpha(t)$ which satisfies the estimate
\begin{equation}\label{exp-stab-estimate} 
\| \alpha(t) \|_{m} \leq \left(\frac{C}{c}\right)^2e^{(\mu+\lambda)  L}  e^{-\lambda t} \|\alpha_0\|_{m},\quad \forall t \geq 0.\end{equation}}
\end{thrm}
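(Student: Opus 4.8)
The plan is to use the backstepping method with a Fredholm transformation, following the general strategy but exploiting the very explicit structure afforded by the Fourier basis. First I would look for an invertible bounded operator $T$ on $H^m_{per}$ (and on the relevant Sobolev scale) mapping the closed-loop system \eqref{ClosedLoop} onto the exponentially stable target system
\begin{equation*}
\left\{\begin{aligned}
z_t + z_x + \mu z &= -\lambda z, \ x \in [0,L],\\
z(t,0) &= z(t,L), \ \forall t \geq 0,
\end{aligned}\right.
\end{equation*}
whose solutions obviously satisfy $\|z(t)\|_m = e^{-\lambda t}\|z(0)\|_m$ because the semigroup is, in Fourier coordinates, multiplication by $e^{-(2i\pi n/L + \mu + \lambda)t}$, hence an isometry up to the factor $e^{-(\mu+\lambda)t}$... more precisely it decays like $e^{-\lambda t}$ after absorbing $\mu$. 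One wants $Tz$ to solve \eqref{ClosedLoop} whenever $z$ solves the target, which by differentiating $\alpha = Tz$ in $t$ and $x$ and matching terms leads to the operator identity
\begin{equation*}
T(\partial_x + \mu + \lambda) = (\partial_x + \mu)T - \langle T\,\cdot\,, F\rangle \varphi
\end{equation*}
on periodic functions, together with the requirement that $T$ preserve the periodic boundary condition. Writing $T$ through its action on the basis, $Te_n = \sum_k t_{kn} e_k$, the transport operators are diagonal, so this intertwining relation becomes, coefficient by coefficient, an explicit triangular-type recursion on the $t_{kn}$; the natural normalization is to take $T$ close to the identity, say $Te_n = e_n + (\text{correction})\,\varphi$-ish terms. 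In fact, given the guessed feedback, I expect the cleanest route is to posit $T$ of the form $I + $ rank-structured perturbation adapted to $\varphi$, and then simply verify the intertwining identity directly rather than derive $T$ from scratch.

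The key steps, in order, are: (1) write down the candidate transformation $T$ explicitly in Fourier coordinates — I anticipate something like $(T\alpha)_n = \alpha_n + \varphi_n \sum_{k} (\cdots) \alpha_k$ with kernel coefficients determined by the resolvent of the target transport operator at the eigenvalues $2i\pi n/L$, which are all shifted away from $-\mu-\lambda$; (2) check that $T$ maps $H^s_{per}$ boundedly into itself for the $s$ in play, using the two-sided bound \eqref{phiGrowth} on $|\varphi_n|$ — this is exactly where the hypothesis $\varphi \in H^m_{(pw)} \cap H^{m-1}_{per}$ and the growth condition are consumed, since they guarantee $\varphi_n$ behaves like $|n|^{-m}$ and the jump of $\varphi$'s derivative prevents faster decay, giving the optimal constants $c,C$; (3) check invertibility of $T$ with bounded inverse on the same spaces (a Fredholm-alternative argument: $T = I + K$ with $K$ compact, plus injectivity, or a Neumann-series/direct-inversion argument using the explicit form); (4) verify the intertwining identity and the boundary conditions, which identifies $F$ with the stated feedback $\langle e_n, F\rangle = -\frac{1-e^{-\lambda L}}{1+e^{-\lambda L}}\frac{2}{L\overline{\varphi_n}}$ — note this $F$ has Fourier coefficients growing like $|n|^m$, consistent with $F \in \mathcal{E}^\prime$ and not in $L^2$, and with the feedback being well-defined on $H^m_{per}$; (5) define the closed-loop solution by $\alpha(t) := T(e^{-\lambda t} \cdot \text{target flow}) T^{-1}\alpha_0$, and propagate the decay estimate of the target through $T$ and $T^{-1}$, whose operator norms produce the constant $(C/c)^2 e^{(\mu+\lambda)L}$ in \eqref{exp-stab-estimate}.

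The main obstacle I expect is step (2)–(3): establishing that $T$ and $T^{-1}$ are bounded isomorphisms of $H^m_{per}$ with norms controlled \emph{precisely} by $C/c$ and the transport factor $e^{(\mu+\lambda)L/2}$ each, rather than by some unquantified constant. The two-sided growth bound \eqref{phiGrowth} is clearly tailored to make this quantitative: the feedback $F$ carries a $1/\overline{\varphi_n}$, so $T$ involves ratios $\varphi_n/\varphi_k$ (or $\varphi_n$ times $1/\varphi_k$) which are controlled in modulus by $C/c$ times the ratio of the Sobolev weights, and the geometric series coming from the transport resolvent sums to exactly the $\frac{1+e^{-\lambda L}}{1-e^{-\lambda L}}$-type factors that cancel against the definition of $F$. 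Tracking these constants through the inverse — likely via an explicit formula for $T^{-1}$ of the same structural form with $\varphi_n$ and the feedback coefficients swapped — is the delicate bookkeeping. A secondary subtlety is justifying existence and uniqueness (or at least existence, as the statement only claims "has a solution") of the closed-loop trajectory in $H^m_{per}$ given that $F \notin L^2$: this is handled precisely because $\langle \cdot, F\rangle$ is continuous on $H^m_{per}$ (as $F \in H^{-m}_{per}$ by the growth of its coefficients), so \eqref{ClosedLoop} generates a strongly continuous semigroup on $H^m_{per}$, or equivalently one just reads the solution off the conjugated target semigroup via $T$.
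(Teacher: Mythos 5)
Your overall architecture --- an explicit Fourier-side transformation conjugating the closed-loop dynamics to the damped transport semigroup, with the constant in \eqref{exp-stab-estimate} coming from the product of the norms of $T$ and $T^{-1}$ --- is the same as the paper's, but two of your key steps fail as stated. First, the claim that $F\in H^{-m}_{per}$ and hence $\langle\cdot,F\rangle$ is continuous on $H^m_{per}$ is false: $F_n=-K(\lambda)/\overline{\varphi_n}$ grows like $|n|^m$, so $F$ is continuous for $\|\cdot\|_{m+s}$ only when $s>1/2$ and is \emph{not} continuous for $\|\cdot\|_m$ (Lemma \ref{Fcont}); indeed a feedback bounded on the state space could not yield an arbitrary decay rate (Sun's obstruction, recalled in the introduction). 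This unboundedness is the heart of the matter: you may well \emph{define} $\alpha(t)$ through the conjugated target semigroup, but to claim it solves \eqref{ClosedLoop} you must prove the operator identity $T(-\partial_x-\mu I+\langle\cdot,F\rangle\varphi)=(-\partial_x-(\mu+\lambda) I)T$ on a dense domain, and since neither $T\varphi$ nor $\langle\alpha,F\rangle$ is directly defined, the paper must (i) give $F$ a meaning on $\tau^\varphi(H^{m+1}_{(pw)})\cap H^m_{per}$ by splitting it into a regular part plus a singular part carrying boundary traces of $\partial^m$ (Proposition \ref{FeedbackReg}), (ii) work on the domain $D_m$, and (iii) pass to the limit along symmetric partial sums, the series $\sum_{|n|\le N}1/\lambda_{p-n}$ being only conditionally convergent (Dirichlet's theorem at the jump of $\Lambda^\lambda_p$ --- which is also exactly where the constant $\frac2L\frac{1-e^{-\lambda L}}{1+e^{-\lambda L}}$ comes from, the $TB=B$ condition holding only in this weak sense). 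Your ``simply verify the intertwining identity directly'' hides all of this.

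Second, the anticipated shape $T=I+K$ with $K$ compact (Fredholm alternative, Neumann series, ``close to the identity'') is structurally wrong here: with the stated feedback the transformation is a constant multiple of $\varphi\star\left(\Lambda^\lambda_0\cdot\beta\right)$ with $\beta=\sum_n(\alpha_n/\varphi_n)e_n$, i.e.\ a conjugation of multiplication by $e^{-\lambda x}$ by convolution with $\varphi$; it is nowhere near the identity, and the paper explicitly notes (Section 4.1) that, unlike the Schr\"odinger case, it cannot be decomposed as isomorphism plus compact. Its invertibility on $H^m_{per}$, with the quantitative bounds needed for \eqref{exp-stab-estimate}, is obtained instead by a Riesz-basis argument for the family $k_{n,\lambda}=-\overline{F_{-n}}\,\Lambda^\lambda_n\star\varphi$: completeness from $\varphi_n\neq0$, frame bounds from the two-sided condition \eqref{phiGrowth} together with the norms of $\Lambda^\lambda$ and its inverse, which is where the factor $e^{\lambda L}$ is actually produced. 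That argument (or, equivalently, the factorization just described) is precisely the ``delicate bookkeeping'' your sketch defers, and it, together with the operator equality above, constitutes the actual proof rather than a detail to be filled in.
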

Note that the estimate \eqref{exp-stab-estimate} is constructive, as it only depends on $c, C, \mu$ and $\lambda$. Though it is not necessarily sharp for a given controller $\varphi$ and the corresponding feedback law $F$, it is the ``least worse'' a priori estimate one can get, in a sense that we will elaborate further on.
The growth restriction \eqref{phiGrowth} on the Fourier coefficients of $\varphi$ can be written, more intuitively, and for some other constants $c^\prime, C^\prime >0$, 
 $$ \frac{c^\prime}{1+\left|\frac{2i \pi n}{L}\right|^{m}} \leq |\varphi_n| \leq \frac{C^\prime}{1+\left|\frac{2i \pi n}{L}\right|^{m}}, \quad \forall n \in \Z,$$
 and corresponds to the necessary and sufficient condition for the controllability of system \eqref{System} in $H^{m}_{per}$, in time $T\geq L$, {\color{black}with $L^2(0,T)$ controls. This is obtained using the moments method, and we refer to \cite[Equation (2.19) and pages 199-200]{Russell2} for more details. The controllability of system \eqref{System}, in turn, will allow us to use a form of backstepping method to stabilize it. }

On the other hand, the additional regularity $\varphi \in H^m_{(pw)}$ gives us the following equality, {\color{black}first using the fact that $\varphi \in H^{m-1}_{per}$, then by integration by parts on each interval $[\sigma_j, \sigma_{j+1}]$, using the fact that $\partial^{m-1}\varphi \in H^1_{(pw)}$:
\begin{equation}\label{DAphi}
\begin{aligned}
   \varphi_n&=\frac{(-1)^{m-1}}{\left(\frac{2i\pi n}L\right)^{m-1}}\langle \varphi, \partial^{m-1}e_n\rangle\\
   &=\frac{1}{\left(\frac{2i\pi n}L\right)^{m-1}}\langle \partial^{m-1}\varphi, e_n\rangle\\
   &=- \frac{\tau^\varphi_n}{\left(\frac{2i\pi}{L}n\right)^m} +  \frac{1}{\left(\frac{2i\pi}{L}n\right)^m} \sum_{j=0}^{d}\left\langle \chi_{[\sigma_j, \sigma_{j+1}]}\partial^m \varphi, e_n\right\rangle, \quad \forall n\in \Z^\ast,
   \end{aligned}
\end{equation}
where 
$$\tau^\varphi_n:= \frac1{\sqrt{L}}\left( \partial^{m-1}\varphi(L)-\partial^{m-1}\varphi(0) + \sum_{j=1}^d e^{-\frac{2i\pi}L n\sigma_j} (\partial^{m-1} \varphi (\sigma_j^-) - \partial^{m-1}\varphi(\sigma_j^+))\right), \quad \forall n \in \Z.$$}
Note that, thanks to condition \eqref{phiGrowth}, there exists $C_1, C_2 >0$ such that 
$$C_1 \leq |\tau^\varphi_n| \leq C_2, \quad n\in \Z,$$
so that these numbers are the eigenvalues of a diagonal isomorphism of any {\color{black}periodic} Sobolev space into itself, which we note $\tau^\varphi$. {\color{black}Moreover, it is clear from the definition of its coefficients that $\tau^\varphi$ is a sum of translations.}
Also, note that $\tau^\varphi_n \neq 0$, and thus, $\varphi \notin H^m_{per}$. Finally, note that
\begin{equation}\label{ResteDA}
\left(\sum_{j=0}^{d}\left\langle \chi_{[\sigma_j, \sigma_{j+1}]}\partial^m \varphi, e_n\right\rangle\right) \in \ell^2.\end{equation}

\subsection{Related results}

{\color{black}To investigate the stabilization of infinite-dimensional systems, there are four main types of approaches.

The first type of approach relies on abstract methods, such as the Gramian approach and the Riccati equations (see for example \cite{Vest, Urquiza,Komornik}). In these works, rapid stabilization was achieved thanks to a generalization of the well-known Gramian method in finite dimension (see \cite{Lukes, Kleinman}). However, the feedback laws that are provided involve the solution to an algebraic Riccati equation, and the inversion of an infinite-dimensional Gramian operator, which makes them difficult to compute in practice.

The second approach relies on Lyapunov functions. Many results on the boundary stabilization of first-order hyperbolic systems, linear and nonlinear, have been obtained using this approach: see for example the book \cite{BastinCoronBook}, and the recent results in \cite{Hayat1,Hayat2}. However, this approach can be limited, as it is sometimes impossible to obtain an arbitrary decay rate using Lyapunov functions (see \cite[Remark 12.9, page 318]{CoronBook} for a finite dimensional example). 

The third approach is related to pole-shifting results in finite dimension. Indeed, it is well-known that if a linear finite-dimensional system is controllable, than its poles can be arbitrarily reassigned (shifted) with an appropriate linear feedback law (see \cite{CoronBook}). There have been some generalizations of this powerful property to infinite-dimensional systems, notably hyperbolic systems. Let us cite \cite{Russell2}, in which the author uses a sort of canonical form to prove a pole-shifting result for a class of hyperbolic systems with a distributed scalar control. In this paper, the feedback laws under consideration are bounded and pole-shifting property is not as strong as in finite dimension. This is actually inevitable, as was proved in \cite{Sun}, in a very general setting: bounded feedback laws can only achieve weak pole-shifting, which is not sufficient for exponential stabilization. However, if one allows for unbounded feedback laws, it is possible to obtain stronger pole-shifting, and in particular exponential stabilization in some cases. This is extensively studied in \cite{Rebarber}, in which the author gives a formula for a feedback law that achieves the desired pole placement. However, this formula requires to know a cardinal function for which the poles coincide with the initial spectrum, which might be difficult in practice. 

The fourth approach, which we will be using in this article, is the backstepping method. This name originally refers to a way of designing feedbacks for finite-dimensional stabilizable systems with an added chain of integrators (see \cite{CoronBook, Sontag, KrsticKanellakopoulosKokotovic}, and \cite{CoronAN} or \cite{KrsticLiu2} for some applications to partial differential equations). Another way of applying this approach to partial differential equations was then developed in \cite{KBHeat} and \cite{KBParab}: when applied to the discretization of the heat equation, the backstepping approach yielded a change of coordinates which was equivalent to a Volterra transform  of the second kind. Backstepping then took yet another successful form, consisting in mapping the system to stable target system, using a Volterra transformation of the second kind (see \cite{KrsticBook} for a comprehensive introduction to the method):
$$f(t,x) \mapsto f(t,x)-\int_0^x k(x,y) f(t,y) dy.$$
This was used to prove a host of results on the boundary stabilization of partial differential equations: let us cite for example \cite{KrsticWave} and \cite{KrsticWave2} for the wave equation, \cite{XiangKdVNull,XiangKdVFinite} for the Korteweg-de Vries equation, \cite[chapter 7]{BastinCoronBook} for an application to first-order hyperbolic systems, and also \cite{BCKV}, which combines the backstepping method with Lyapunov functions to prove finite-time stabilization in $H^2$ for a quasilinear $2\times 2$ hyperbolic system. 

The backstepping method has the advantage of providing explicit feedback laws, which makes it a powerful tool to prove other related results, such as null-controllability or small-time stabilization (stabilization in an arbitrarily small time). This is done in \cite{CoronNguyen}, where the authors give an explicit control to bring a heat equation to $0$, then a time-varying, periodic feedback to stabilize the equation in small time. In \cite{XiangKdVFinite}, the author obtains the same kind of results for the Korteweg-de Vries equation.

In some cases, the method was used to obtain stabilization with an internal feedback. This was done in \cite{KrsticHaraTsub} and \cite{Woittennek} for parabolic systems, and \cite{YuXuJiangGanesan} for first-order hyperbolic systems. The strategy in these works is to first apply a Volterra transformation as usual, which still leaves an unstable source term in the target, and then apply a second invertible transformation to reach a stable target system. Let us note that in the latter reference, the authors study a linear transport equation and get finite-time stabilization. However, their controller takes a different form than ours, and several hypotheses are made on the space component of the controller so that a Volterra transform can be successfully applied to the system. This is in contrast with the method in this article, where the assumption we make on the controller corresponds to the exact null-controllability of the system. 

In this paper, we use another application of the backstepping method, which uses another type of linear transformations, namely, Fredholm transformations:
$$f(t,x) \mapsto \int_0^L k(x,y) f(t,y) dy.$$
These are more general than Volterra transformations, but they require more work: indeed, Volterra transformations are always invertible, but the invertibility of a Fredholm transform is harder to check.  Even though it is sometimes more involved and technical, the use of a Fredholm transformation proves more effective for certain types of control: for example, in \cite{CoronLu1} for the Korteweg-de Vries equation and \cite{CoronLu2} for a Kuramoto-Sivashinsky, the position of the control makes it more appropriate to use a Fredholm transformation. Other boundary stabilization results using a Fredholm transformation can be found in \cite{CHO1} for integro-differential hyperbolic systems, and in \cite{CHO2} for general hyperbolic balance laws. 

Fredholm transformations have also been used in \cite{Schrodinger}, where the authors prove the rapid stabilization of the Schrödinger equation with an internal feedback. Their method of proof relies on the assumption that the system is controllable, and the technical developments are quite different from the work in previous references. 
This is a new development in the evolution of the backstepping method. Indeed, the original form of the backstepping method, and the backstepping method with Volterra transformations of the second kind, could be applied to uncontrollable systems. Hence, a controllability assumption makes for potentially powerful additional information, for example when one considers the more general Fredholm transformations instead of Volterra transformations of the second kind. It is interesting to note that the role played by controllability is also a feature of the pole-shifting approach and the Gramian method, although in this setting it leads to an explicit feedback law given by its Fourier coefficients, instead of the inverse of the Gramian operator, or, in the case of Richard Rebarber's result in \cite{Rebarber}, a cardinal function.

\subsection{The backstepping method revisited: a finite-dimensional example}\label{FiniteD}
Let us now give a finite-dimensional example to illustrate the role controllability can play in the backstepping method for PDEs.
Consider the finite-dimensional control system
\begin{equation}\label{FiniteDControlSys}
\dot{x}=Ax+ Bu(t), \quad x\in \CC^n, A \in M_n (\CC), B\in M_{n,1} (\CC).
\end{equation}
Assume that $(A,B)$ is controllable. Suppose that $x(t)$ is a solution of system \eqref{FiniteDControlSys} with $u(t)=Kx(t)$. Now, in the spirit of PDE backstepping, let us try to invertibly transform the resulting closed-loop system into another controllable system, namely
\begin{equation}\label{FiniteDTarget}
\dot{x}=\tilde{A}x,
\end{equation}
which can be exponentially stable if $\tilde{A}$ is well chosen. 

 Such a transformation $T$ would map the closed loop system to
$$\dot{(Tx)} = T\dot{x}=T(A+BK)x .$$
In order for $Tx$ to be a solution of \eqref{FiniteDTarget}, we would need
\begin{equation}\label{FirstFiniteDOpEq}
   T(A+BK)=\tilde{A}T.
\end{equation}
One can see quite clearly that this matrix equation is not well-posed, in that if it has a solution, it has an infinity of solutions. Moreover, the variables $T$ and $K$ are not separated because of the $TBK$ term, and as a result the equation is nonlinear. Hence, we can add the following constraint to equation \eqref{FirstFiniteDOpEq}, to separate the variables, make the equation linear in $(T,K)$, and get a uniqueness property:
\begin{equation}
    TB=B.
\end{equation}
Injecting the above equation into \eqref{FirstFiniteDOpEq}, we get the following equations:
\begin{equation}\label{FiniteDOpEq}
\begin{aligned}
TA+BK&=\tilde{A}T, \\
TB&=B,
\end{aligned}
\end{equation}
Now for this set of equations, one can prove the following theorem, using the Brunovski normal form (or canonical form):
\begin{thrm}
If $(A,B)$ and $(\tilde{A}, B)$ are controllable, then there exists a unique pair $(T,K)$ satisfying conditions \eqref{FiniteDOpEq}.
\end{thrm}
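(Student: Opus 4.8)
The plan is to exploit the fact that, $B$ being a single column and $(A,B)$ being controllable, the Krylov vectors $B, AB, \dots, A^{n-1}B$ form a basis of $\CC^n$. A solution $(T,K)$ of \eqref{FiniteDOpEq} is then completely determined by the $n$ vectors $T(A^jB)$ together with the $n$ scalars $k_j := K A^j B$ (which in turn fix the row vector $K$), so the whole problem reduces to a finite linear system in these unknowns. First I would show that this system admits exactly one candidate solution, and then check that the candidate does solve \eqref{FiniteDOpEq}.

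So first I would read off the structure forced on any solution. Rewriting the first relation of \eqref{FiniteDOpEq} as $TA = \tilde A T - BK$ and applying it in turn to $B, AB, \dots, A^{n-2}B$ gives the recursion $T(A^{j+1}B) = \tilde A\, T(A^jB) - k_j B$; combined with $TB = B$ this yields
\[
T(A^jB) = \tilde A^j B - \sum_{i=0}^{j-1} k_i\, \tilde A^{\,j-1-i}B, \qquad 0 \le j \le n-1,
\]
so $T$ is entirely determined by $k_0, \dots, k_{n-2}$ (the scalar $k_{n-1}$ does not even appear). Applying the relation once more, to $A^{n-1}B$, and eliminating $T(A^nB)$ via the Cayley--Hamilton identity $A^nB = -\sum_{i=0}^{n-1} a_i A^iB$ — where $p_A(\lambda)=\lambda^n+\sum_{i=0}^{n-1}a_i\lambda^i$ is the characteristic polynomial of $A$ — produces one last vector identity, which after collecting the $k_l$-terms takes the form
\[
\sum_{l=0}^{n-1} k_l\, w_l = p_A(\tilde A)\,B, \qquad w_{n-1} = B, \quad w_l = \tilde A^{\,n-1-l}B + \sum_{i=l+1}^{n-1} a_i\, \tilde A^{\,i-1-l}B, \quad 0 \le l \le n-2.
\]

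The hard part — and the only place where the controllability of the \emph{target} pair $(\tilde A, B)$ enters — is to show this $n\times n$ system is nonsingular. Since $(\tilde A, B)$ is controllable, $B, \tilde A B, \dots, \tilde A^{n-1}B$ is itself a basis of $\CC^n$; and in that basis the matrix with columns $w_{n-1}, w_{n-2}, \dots, w_0$ is upper triangular with all diagonal entries equal to $1$, because the highest Krylov term of $w_l$ is $\tilde A^{\,n-1-l}B$. Hence it is invertible, so $(k_0, \dots, k_{n-1})$ — and therefore $K$ — is uniquely determined, and then $T$ is uniquely determined by the displayed formula for $T(A^jB)$. Conversely, with these choices one checks directly that $TB = B$ and that $TA + BK - \tilde A T$ vanishes on each of $B, AB, \dots, A^{n-1}B$: on $A^jB$ for $j \le n-2$ by the very construction of $T(A^jB)$, and on $A^{n-1}B$ because the $k_l$ were chosen to solve precisely that identity. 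As these vectors span $\CC^n$, this gives $TA + BK = \tilde A T$, completing the existence-and-uniqueness proof.

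Everything else is routine: identifying the $w_l$ and verifying their triangular pattern is bookkeeping, and no separate compatibility argument is needed for the last equation since it is solved by construction. An equivalent, slightly slicker route is to first bring $(A,B)$ and $(\tilde A,B)$ to their Brunovský canonical forms by two (in general different) changes of basis, which reduces \eqref{FiniteDOpEq} to the same system with $A$ and $\tilde A$ both companion matrices and $B=(0,\dots,0,1)^\top$, where $T$ and $K$ can essentially be read off column by column. Finally, I would record the remark that applying the uniqueness statement also to the pairs $(\tilde A \to A)$ and $(A \to A)$ forces the composition of the two transformations to be the identity, so that the $T$ obtained here is automatically invertible — which is what makes it a legitimate backstepping transformation.
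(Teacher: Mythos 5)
Your proof is correct, but it takes a genuinely different route from the paper. The paper proves this statement by invoking the Brunovsk\'y (canonical) form of the two controllable pairs, and then sketches, as an instructive alternative, a spectral argument (projection on eigenvectors of $A$, with the extra hypotheses that $A$ is diagonalizable and that $A$ and $\tilde A$ have disjoint spectra) taken from the Schr\"odinger-equation reference. You instead work directly in the Krylov basis $B, AB, \dots, A^{n-1}B$ of the controllable pair $(A,B)$: the recursion $T(A^{j+1}B)=\tilde A\,T(A^jB)-k_jB$ together with $TB=B$ determines $T$ from the scalars $k_j=KA^jB$, Cayley--Hamilton closes the system, and controllability of $(\tilde A,B)$ makes the resulting $n\times n$ system unitriangular in the Krylov basis of $(\tilde A,B)$, hence uniquely solvable; the converse verification on the spanning set $B,\dots,A^{n-1}B$ is indeed immediate. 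This buys you an elementary, self-contained and constructive proof (explicit formulas for $K$ and for $T$ on a basis, no canonical-form machinery, no diagonalizability or spectral-disjointness assumptions), whereas the paper's Brunovsk\'y route is shorter once the canonical form is taken for granted, and its spectral sketch is the one that generalizes to the PDE setting of the rest of the article. Two small remarks: your ``upper triangular with unit diagonal'' claim holds after ordering the unknowns as $k_{n-1},k_{n-2},\dots,k_0$ (with the natural ordering it is lower unitriangular --- harmless either way), and your closing observation that uniqueness applied to the swapped and identical pairs forces $ST=TS=I$ is a valid bonus: the theorem as stated does not assert invertibility of $T$, but this argument recovers it, which is exactly what is needed for $T$ to serve as a backstepping transformation.
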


This shows that controllability can be very useful when one wants to transform systems into other systems. In the finite-dimensional case, using the canonical form is the most efficient way of writing it. However, in order to gain some insight on the infinite-dimensional case, there is a different proof, relying on the spectral properties of $A$ and $\tilde{A}$, which can be found in \cite{Schrodinger}. The idea is that the controllability of $A$ allows to build a basis for the space state, in which $T$ can then be constructed.
Indeed, suppose $A$ is diagonalizable with eigenvectors $(e_n, \lambda_n)_{1\leq n \leq N}$, and suppose that $\tilde{A}$ and $A$ have no mutual eigenvalues. Then, let us project \eqref{FiniteDOpEq} on $e_n$:
\begin{equation}
    \label{Proj-Kern-FiniteD}
\lambda_n Te_n + (Ke_n)B=\tilde{A} Te_n , 
\end{equation}
from which we get the following relationship
\begin{equation}
\label{Basis-FiniteD}
    Te_n=(Ke_n)(\tilde{A}-\lambda_n I)^{-1} B, \quad \forall n \in \{1, \cdots, N \}.
\end{equation}
Then, using the Kalman rank condition on the pair $(\tilde{A},B)$, one can prove that the $f_n:=((\tilde{A}-\lambda_n I)^{-1} B)$ form a basis of $\RR^N$. 

Knowing this, write
\begin{equation}
\begin{aligned}
    B&=\sum_{n=1}^N b_n e_n, \\
    B&=\sum_{n=1}^N \tilde{b}_n f_n,
\end{aligned}
\end{equation}
and $TB$ is written naturally in this basis:
\begin{equation}
    TB=\sum_{n=1}^N (Ke_n)b_nf_n,
\end{equation}
so that
the second equation of \eqref{FiniteDOpEq} becomes
\begin{equation}
    \sum_{n=1}^N (Ke_n)b_nf_n=\sum_{n=1}^N \tilde{b}_n f_n.
\end{equation}
Using the Kalman rank condition on $(A,B)$, one can prove that $b_n \neq 0$ so that the $(Ke_n)$ are uniquely determined. The only thing that remains to prove is the invertibility of $T$, as the $(Ke_n)$ could be $0$. In the end the invertibility is proven thanks to the Hautus test on the pair $(\tilde{A}, B)$, and the uniqueness is given by the $TB=B$ condition. 

\begin{rmrk}
In this finite-dimensional example, one can see a relationship between the Gramian method and our variant of the backstepping method. Indeed, the Gramian matrix, defined by 
\begin{equation*}
    C_\omega^\infty:=\int_0^\infty e^{-2\omega t} e^{-tA} B B^\ast e^{-tA^\ast} dt, 
\end{equation*}
is the solution of the Lyapunov equation:
\begin{equation*}
   C_\omega^\infty(A+\omega I)^\ast + (A+\omega I) C_\omega^\infty=B B^\ast.
\end{equation*}

Now, injecting the feedback law $K:=-B^\ast ( C_\omega^\infty)^{-1}$ given by the Gramian method, this equation becomes:
\begin{equation*}
   C_\omega^\infty(A+\omega I)^\ast + (A+\omega I) C_\omega^\infty=-B K C_\omega^\infty,
\end{equation*}
which becomes, after multiplication by $(C_\omega^\infty)^{-1}$ on the left and on the right,
\begin{equation*}
    (C_\omega^\infty)^{-1}(A+BK)=(-A^\ast- 2\omega I)(C_\omega^\infty)^{-1},
\end{equation*}
which is of the form \eqref{FirstFiniteDOpEq}, with $\tilde{A}=-A^\ast-2\omega I$ and $T=(C_\omega^\infty)^{-1}$. The fundamental difference then comes from the fact that in the Gramian method, this backstepping-type equation is coupled with the definition of the feedback law
\begin{equation*}
    B^\ast (C_\omega^\infty)^{-1}=-K,
\end{equation*}
whereas in the backstepping method, the backstepping-type equation \eqref{FirstFiniteDOpEq} is coupled with the $TB=B$ condition, which can be recast as
\begin{equation*}
    B^\ast T^\ast = B^\ast.
\end{equation*}
The former leads to a Lyapunov-type analysis of stability, whereas the latter allows us to use Fourier analysis to give explicit coefficients for the feedback law. 
\end{rmrk}

\subsection{Structure of the article}

The structure of this article follows the outline of the proof given above: in Section 2, we look for candidates for the backstepping transformation in the form of Fredholm transformations. Formal calculations (and a formal $TB=B$ condition) lead to a PDE analogous to \eqref{Proj-Kern-FiniteD} which we solve, which is analogous to the derivation of \eqref{Basis-FiniteD}. 
Using the properties of Riesz bases and the controllability assumption, we prove that such candidates are indeed invertible, under some conditions on the feedback coefficients $(F_n)$. For consistency, we then determine the feedback law $(F_n)$ such that the corresponding transformation indeed satisfies a weak form of the $TB=B$ condition.
Then, in Section 3, we check that the corresponding transformation indeed satisfies an operator equality  analogous to \eqref{FiniteDOpEq}, making it a valid backstepping transformation. 
We check the well-posedness of the closed-loop system for the feedback law obtained in Section 2, which allows us to prove the stability result. Finally, Section 4 gives a few remarks on the result, as well as further questions on this stabilization problem.
}
\section{Definition and properties of the transformation}
Let $\lambda^\prime >0$ {\color{black}be such that $\lambda^\prime - \mu >0$}, and $m \geq 1$.  Let $\varphi \in H^m \cap H^{m-1}_{per}$ be a real-valued function satisfying \eqref{phiGrowth}.
We consider the following target system:
\begin{equation}\label{Target1}
\left\{\begin{aligned}
z_t+z_x +\lambda^\prime z& =0, \quad x\in (0,L), \\
z(t,0)& =z(t,L), \quad t\geq 0.\end{aligned}\right.
\end{equation}
Then it is well-known that, taking $\alpha_0 \in L^2$, the solution to \eqref{Target1} with initial condition $\alpha_0$ writes
$$ z(t,x)= e^{-\lambda^\prime t} \alpha_0 (x -t), \quad \forall (t,x) \in \RR^{+} \times (0,L).$$
Hence,
\begin{prpstn}For all $s \geq 0$, the system \eqref{Target1} is exponentially stable for $\|\cdot \|_s$, for initial conditions in $H^s_{per}$.\end{prpstn}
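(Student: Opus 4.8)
The plan is to read off the result directly from the explicit solution formula recalled just above: the solution of \eqref{Target1} with datum $\alpha_0$ is $z(t,\cdot)=e^{-\lambda^\prime t}\,S_t\alpha_0$, where $S_t\colon f\mapsto f(\cdot-t)$ denotes the $L$-periodic shift by $t$, acting on $L$-periodic continuations. The whole statement then reduces to the fact that, for every $s\geq 0$ and every $t$, $S_t$ is a unitary operator of $H^s_{per}$.

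To see this I would simply compute the action of $S_t$ on the Hilbert basis $(e_n)$: since $e_n(x-t)=e^{-\frac{2i\pi}{L}nt}e_n(x)$, one has, for $\alpha_0=\sum_{n\in\Z}(\alpha_0)_n e_n$,
$$S_t\alpha_0=\sum_{n\in\Z} e^{-\frac{2i\pi}{L}nt}\,(\alpha_0)_n\, e_n,$$
so that the $n$-th Fourier coefficient of $S_t\alpha_0$ is $(\alpha_0)_n$ multiplied by the unimodular factor $e^{-\frac{2i\pi}{L}nt}$. Plugging this into the definition of $\|\cdot\|_s$ from the Definition above gives
$$\|S_t\alpha_0\|_s^2=\sum_{n\in\Z}\left(1+\left|\frac{2i\pi n}{L}\right|^{2s}\right)\left|e^{-\frac{2i\pi}{L}nt}\right|^2|(\alpha_0)_n|^2=\|\alpha_0\|_s^2,$$
hence $S_t$ maps $H^s_{per}$ into itself and preserves $\|\cdot\|_s$. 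In particular $z(t,\cdot)\in H^s_{per}$ for all $t\geq 0$ whenever $\alpha_0\in H^s_{per}$, and
$$\|z(t)\|_s=e^{-\lambda^\prime t}\|S_t\alpha_0\|_s=e^{-\lambda^\prime t}\|\alpha_0\|_s,\quad\forall t\geq 0,$$
which is exponential stability for $\|\cdot\|_s$ (with decay rate $\lambda^\prime$ and constant $1$; in fact an equality).

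There is no real obstacle here; the only two points deserving a word of care are, first, that for non-integer $s$ one must argue directly with the Fourier-series definition of $H^s_{per}$ rather than with derivatives, which is exactly what the computation above does; and second, that $z(t,\cdot)$ genuinely lies in the periodic space $H^s_{per}$ and not merely in $L^2$ — this is automatic, since $S_t$ acts diagonally on the $(e_n)$ and therefore cannot destroy the summability condition defining $H^s_{per}$. One could alternatively phrase the argument via the $C_0$-group generated by $-\partial_x$ on $H^s_{per}$, which is an isometry group for the same reason, but the Fourier computation is the most economical.
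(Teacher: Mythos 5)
Your proof is correct and follows exactly the route the paper intends: the paper simply states the proposition as an immediate consequence of the explicit solution formula $z(t,x)=e^{-\lambda^\prime t}\alpha_0(x-t)$, and your Fourier computation showing that the periodic shift acts unitarily on $H^s_{per}$ (hence $\|z(t)\|_s=e^{-\lambda^\prime t}\|\alpha_0\|_s$) is precisely the omitted justification.
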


\subsection{Kernel equations}\label{FindKernel}
As mentioned in the introduction, we want to build backstepping transformations $T$ as a kernel operator of the Fredholm type:
$$f(t,x) \mapsto \int_0^L k(x,y) f(t,y) dy.$$
To have an idea of what this kernel looks like, we can do the following formal computation for some Fredholm operator $T$: first the boundary conditions
$$\left(\int_{0}^L k(0,y)\alpha(y) dy\right)=\left(\int_{0}^L k(L,y)\alpha(y) dy\right),$$
then the equation of the target system, for $x\in [0,L]$:
{\def\arraystretch{3}$$\begin{array}{rcl}
 0&=& \left(\int_{0}^L k(x,y)\alpha(y) dy\right)_t + \left(\int_{0}^L k(x,y)\alpha(y) dy\right)_x +\lambda^\prime \left(\int_{0}^L k(x,y)\alpha(y) dy\right)   \\
&=& \left(\int_{0}^L k(x,y)\alpha_t(y) dy\right) + \left(\int_{0}^L k_x(x,y)\alpha(y) dy\right)+\lambda^\prime \left(\int_{0}^L k(x,y)\alpha(y) dy\right) \\
&=& \left(\int_{0}^L k(x,y)(-\alpha_x(y) - \mu \alpha(y) + \langle \alpha, F \rangle \varphi(y)) dy\right)+ \left(\int_{0}^L (k_x(x,y)+\lambda^\prime k(x,y))\alpha(y) dy\right) 
\end{array}$$}
{\def\arraystretch{3}$$\begin{array}{rcl}
&=& \left(\int_{0}^L k_y(x,y)\alpha(y)  dy\right) - (k(x,L) \alpha (L)-k(x,0)\alpha(0))+ \left(\int_{0}^L k(x,y)\langle \alpha, F \rangle \varphi(y)) dy\right)+\\
& & \left(\int_{0}^L (k_x(x,y)+(\lambda^\prime-\mu) k(x,y))\alpha(y) dy\right)\\
&=&  \left(\int_{0}^L k(x,y)\left(\int_0^L \bar{F}(s)\alpha(s) ds\right) \varphi(y)) dy\right)- (k(x,L) \alpha (L)-k(x,0)\alpha(0))\\
& & + \left(\int_{0}^L (k_y(x,y)+k_x(x,y)+(\lambda^\prime-\mu) k(x,y))\alpha(y) dy\right)\\
&=&  \left(\int_{0}^L \bar{F}(s)\left(\int_0^L k(x,y) \varphi(y) dy\right) \alpha(s) ds\right)- (k(x,L) \alpha (L)-k(x,0)\alpha(0))\\
& & + \left(\int_{0}^L (k_y(x,y)+k_x(x,y)+(\lambda^\prime-\mu) k(x,y))\alpha(y) dy\right).
\end{array}$$}
Now, suppose we have the formal $TB=B$ condition
$$ \int_0^L k(x,y) \varphi(y) dy = \varphi(x), \quad \forall x \in [0,L].$$
Then, we get, noting $\lambda:=\lambda^\prime-\mu{\color{black} >0}$,
$$\left(\int_{0}^L \left(k_y(x,y)+k_x(x,y)+\lambda k(x,y) + \varphi(x)\bar{F}(y)\right)\alpha(y) dy\right)- (k(x,L) \alpha (L)-k(x,0)\alpha(0)) = 0. $$
Hence the kernel equation:
\begin{equation}\label{Kernel1}
\left\{\begin{aligned}
k_x+k_y+\lambda k&=- \varphi(x)\bar{F}(y), \\
k(0,y) & =k(L,y), \\
k(x,0)& =k(x,L),\end{aligned}\right.
\end{equation}
together with the $TB=B$ condition
\begin{equation}\label{TB}
\left\langle k(x, \cdot), \varphi(\cdot) \right\rangle = \varphi(x), \quad \forall x \in [0,L].
\end{equation}

\subsection{Construction of Riesz bases for Sobolev spaces}

To study the solution to the kernel equation, we project it along the variable $y$. Let us write heuristically 
$$k(x,y)=\sum_{n \in \Z} k_{n}(x)e_n(y),$$
so that
$$\int_0^L k(x,y)\alpha(y)dy = \sum_{n\in \Z} \alpha_n k_{-n}(x).$$
Projecting the kernel equations \eqref{Kernel1}, we get
\begin{equation}\label{kn1}
k_{n}^\prime + \lambda_n k_{n} = -\overline{F_{-n}} \varphi,
\end{equation}
where
\begin{equation}\label{lambda_n}
\lambda_n=\lambda + \frac{2i\pi}{L} n.
\end{equation}
Note that 
\begin{equation}\label{ElementaryCoeff}
  \frac{2i\pi p}{L} \frac1{\lambda_{n+p}} + \lambda_n \frac1{\lambda_{n+p}} = 1,\quad \forall n, p  \in \Z.
\end{equation}
Now consider the $L^2$ function given by
\begin{equation}\label{Elementary}
\Lambda^{\lambda}_n(x)=\frac{\sqrt{L}}{1-e^{-\lambda L}}e^{-\lambda_n x},  \quad \forall n \in \Z, \quad\forall x \in [0, L).
\end{equation}
Then, for all $m\geq 0$, $\Lambda^{\lambda}_n \in H^m$, and we have
$$ \langle \Lambda^{\lambda}_n  , e_p \rangle = \frac1{\sqrt{L}}\int_0^L\frac{\sqrt{L}}{1-e^{-\lambda L}}e^{-\lambda_n x} e^{-\frac{2i\pi p}{L}x} dx = \frac{1}{1-e^{-\lambda L}}\int_0^L e^{-\lambda_{n+p} x}dx= \frac1{\lambda_{n+p}}, \quad \forall n, p  \in \Z,  $$
so that, using \eqref{ElementaryCoeff},
$$(\Lambda^{\lambda}_n)^\prime + \lambda_n \Lambda^{\lambda}_n = \sum_{p \in \Z} e_p \ \textrm{in} \  \dist.$$

\begin{rmrk}
In $\dist$, $\sum_{p \in \Z} e_p$ is the equivalent of the Dirac comb, or the ``Dirac distribution'' on the space of functions on $[0,L]$. So, in a sense, $\Lambda^{\lambda}_n$ is the elementary solution of \eqref{kn1}.
\end{rmrk}

Let us now define, in analogy with the elementary solution method,
\begin{equation}\label{kndef}
 k_{n, \lambda}=-\overline{F_{-n}} \Lambda^{\lambda}_n \star \varphi \in H^m_{per}, \quad \forall n \in \Z.
\end{equation}
The regularity comes from the definition of the convolution product, \eqref{phiGrowth} and \eqref{lambda_n}, and one can check, using \eqref{ElementaryCoeff}, that $k_{n, \lambda}$ is a solution of \eqref{kn1}.

The next step to build an invertible transformation is to find conditions under which $(k_{n, \lambda})$ is some sort of basis. More precisely we use the notion of Riesz basis (see \cite[Chapter 4]{Christensen})
\begin{dfntn} 
A Riesz basis in a Hilbert space $H$ is the image of an orthonormal basis of $H$ by an isomorphism.
\end{dfntn}
\begin{prpstn}\label{Riesz}
Let $H$ be a Hilbert space. A family of vectors $(f_k)_{k \in \NN} \in H$ is a Riesz basis if and only if it is complete (i.e., $ \overline{\textup{Span}(f_k)}=H $) and there exists constants $C_1, C_2 >0$ such that, for any scalar sequence $(a_k)$ with finite support, 
\begin{equation}\label{RieszBounds}
C_1 \sum |a_k|^2 \leq \left\|\sum a_k f_k \right\|_H^2 \leq C_2 \sum |a_k|^2.
\end{equation}
\end{prpstn}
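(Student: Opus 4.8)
The final statement in the excerpt is the characterization of Riesz bases (Proposition): $(f_k)$ is a Riesz basis iff it is complete and satisfies the two-sided frame-type bound \eqref{RieszBounds}. This is a classical result (it appears in Christensen's book, which the paper cites), so my plan is to reconstruct its standard proof.

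The plan is to prove the two implications directly, using only standard functional analysis: extension of a densely defined bounded operator, the fact that a bounded-below operator has closed range, and the open mapping theorem. Throughout, fix an orthonormal basis $(e_k)_{k\in\NN}$ of $H$ (which exists since $H$ is separable here), so that $\textup{Span}(e_k)$ is dense in $H$ and Parseval gives $\|\sum a_k e_k\|_H^2 = \sum |a_k|^2$ for finitely supported $(a_k)$.

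For the forward implication, I would suppose $(f_k) = U(e_k)$ for some isomorphism $U$ of $H$. Since $U$ is a homeomorphism, it maps the dense subspace $\textup{Span}(e_k)$ onto the subspace $\textup{Span}(f_k)$, which is therefore dense, giving completeness. For the bounds, any finite sum satisfies $\|\sum a_k f_k\|_H = \|U(\sum a_k e_k)\|_H$, and the elementary inequalities $\|U^{-1}\|^{-1}\|v\|_H \le \|Uv\|_H \le \|U\|\,\|v\|_H$ applied to $v=\sum a_k e_k$, combined with Parseval, yield \eqref{RieszBounds} with $C_1 = \|U^{-1}\|^{-2}$ and $C_2 = \|U\|^2$.

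For the converse, I would define a linear map on $\textup{Span}(e_k)$ by $U(\sum a_k e_k) := \sum a_k f_k$; this is well-defined on finite linear combinations because $(e_k)$ is linearly independent. The upper bound in \eqref{RieszBounds} says precisely that $\|Uv\|_H \le \sqrt{C_2}\,\|v\|_H$ on this dense subspace, so $U$ extends uniquely to a bounded operator on $H$, still denoted $U$, with $Ue_k = f_k$. The lower bound gives $\|Uv\|_H \ge \sqrt{C_1}\,\|v\|_H$ first for $v\in\textup{Span}(e_k)$ and then, by density and continuity of $U$ and of the norm, for all $v\in H$; in particular $U$ is injective. A bounded-below operator has closed range: if $Uv_n \to w$, then $(v_n)$ is Cauchy since $\|v_n - v_m\|_H \le C_1^{-1/2}\|Uv_n - Uv_m\|_H$, so $v_n \to v$ and $Uv = w$. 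Hence $\textup{Ran}(U)$ is a closed subspace containing $\textup{Span}(f_k)$, which is dense by completeness, so $\textup{Ran}(U)=H$ and $U$ is bijective. By the open mapping theorem $U^{-1}$ is bounded, so $U$ is an isomorphism with $Ue_k = f_k$, i.e. $(f_k)$ is a Riesz basis.

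I do not expect a genuine obstacle here: this is the classical characterization of Riesz bases and its proof is essentially the content of the cited reference \cite{Christensen}. The only points needing a little care are checking that $U$ is genuinely well-defined on finite combinations (immediate from orthonormality of $(e_k)$) and chaining the right classical facts in the correct order — density-extension for the upper bound, ``bounded below $\Rightarrow$ closed range'' together with density of $\textup{Span}(f_k)$ for surjectivity, and the open mapping theorem for boundedness of $U^{-1}$.
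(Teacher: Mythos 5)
Your proof is correct, and it is the standard argument (density extension, bounded below implies closed range, open mapping theorem) for this classical characterization. The paper itself offers no proof of this proposition — it is quoted from the cited reference \cite[Chapter 4]{Christensen} — so your reconstruction is exactly the expected one and there is nothing to compare beyond that.
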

Let us now introduce the following growth condition:
\begin{dfntn}
Let $s \geq 0$, $(u_n)\in \CC^\Z$ (or $u\in \dist$) . We say that $(u_n)$ (or $u$) has $s$-growth if 
\begin{equation}\label{FGrowth}
 c\sqrt{1+\left|\frac{2i \pi n}{L}\right|^{2s}}\leq |u_n| \leq C\sqrt{1+\left|\frac{2i \pi n}{L}\right|^{2s}}, \quad \forall n \in \Z, 
\end{equation}
for some $c,C>0$. The optimal constants for these inequalities are called growth constants.
\end{dfntn}

\begin{rmrk}
The inequalities \eqref{FGrowth} can also be written, more intuitively, and for some other positive constants,
\begin{equation}
 c\left(1+|n|^{s}\right)\leq |u_n| \leq C\left(1+|n|^{s}\right), \quad \forall n \in \Z.
\end{equation}
\end{rmrk}
 
 We can now establish the following Riesz basis properties for the $(k_{n, \lambda})$:
\begin{prpstn}\label{RieszFam}
Let $s\geq 0$.
If $(F_n)$ has $s$-growth, then the family of functions
$$(k_{n, \lambda}^s):= \left(\frac{k_{n, \lambda}}{\sqrt{1+\left|\frac{2i\pi n}{L}\right|^{2s}}}\right)$$
is a Riesz basis for $H^{m}_{per}$.
\end{prpstn}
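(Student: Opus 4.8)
The plan is to realize the family $(k_{n, \lambda}^s)_{n \in \Z}$ as the image of the orthonormal basis $(e_n^s)_{n\in\Z}$ of $H^s_{per}$ under an explicit isomorphism onto $H^m_{per}$, which I will construct as a composition of three elementary, Fourier-multiplier-type isomorphisms; by the definition of a Riesz basis this is all that is needed. (Equivalently one can check the two conditions of Proposition~\ref{Riesz} directly --- the completeness of the family and the two-sided estimate \eqref{RieszBounds} both drop out of the same computation.)

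First, by \eqref{Conv} the map $M_\varphi : g \mapsto g \star \varphi$ acts on Fourier coefficients by $(g_p)_p \mapsto (g_p \varphi_p)_p$, and the growth assumption \eqref{phiGrowth} gives $|\varphi_p|^2 \asymp (1 + |2i\pi p/L|^{2m})^{-1}$, so that $\sum_p (1+|2i\pi p/L|^{2m})\,|g_p\varphi_p|^2 \asymp \sum_p |g_p|^2$; hence $M_\varphi$ is an isomorphism from $L^2$ onto $H^m_{per}$, the lower bound in \eqref{phiGrowth} being exactly what makes the inverse (the multiplier by $(1/\varphi_p)_p$, well defined since $\varphi_p \neq 0$) bounded. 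Next, since $e^{-\lambda L} \le e^{-\lambda x} \le 1$ on $[0,L]$, pointwise multiplication by $e^{-\lambda x}$ is an isomorphism of $L^2$, which I call $M_{e^{-\lambda\cdot}}$; and from \eqref{Elementary}, using $e^{-\lambda_n x} = \sqrt L\, e^{-\lambda x}\, e_{-n}(x)$, one has
$$\Lambda^\lambda_n = \frac{L}{1-e^{-\lambda L}}\, e^{-\lambda\cdot}\, e_{-n}, \quad \forall n \in \Z.$$
Finally, the $s$-growth of $(F_n)$, applied at the index $-n$, shows that the scalars $\sigma_n := -\overline{F_{-n}}\,(1+|2i\pi n/L|^{2s})^{-1/2}$ satisfy $0 < c \le |\sigma_n| \le C$ for all $n$; since $(e_{-n})_{n\in\Z}$ is again an orthonormal basis of $L^2$, the map $S$ defined by $S(e_n^s) := \sigma_n e_{-n}$ extends to an isomorphism $S : H^s_{per} \to L^2$.

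Combining these three maps and recalling the definition \eqref{kndef}, namely $k_{n, \lambda} = -\overline{F_{-n}}\,\Lambda^\lambda_n \star \varphi$, the composite $\Phi := \frac{L}{1-e^{-\lambda L}}\, M_\varphi \circ M_{e^{-\lambda\cdot}} \circ S$ is an isomorphism $H^s_{per} \to H^m_{per}$ satisfying $\Phi(e_n^s) = k_{n, \lambda}^s$ for every $n \in \Z$, so $(k_{n, \lambda}^s)_{n\in\Z}$ is a Riesz basis of $H^m_{per}$. The step I expect to be the main obstacle is the first one: that $M_\varphi$ maps $L^2$ \emph{onto} $H^m_{per}$ with a bounded inverse. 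This is precisely where the two-sided bound \eqref{phiGrowth} --- the fact that $|\varphi_n|$ decays exactly at the rate $|n|^{-m}$, no faster and no slower --- is indispensable, and it matches the controllability hypothesis. The remaining steps are routine bookkeeping with Fourier multipliers and the reindexing $n \mapsto -n$.
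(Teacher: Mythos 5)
Your argument is correct, and it reaches the conclusion by a genuinely different, more structural route than the paper. The paper verifies the two conditions of Proposition \ref{Riesz}: completeness is proved by testing $\langle f,k_{n,\lambda}^s\rangle_m=0$ against the complete family $(\Lambda^\lambda_n)$ of $L^2$ and using \eqref{phiGrowth}, and the two-sided bounds \eqref{RieszBounds} are obtained by expanding $\bigl\|\sum a_n k_{n,\lambda}^s\bigr\|_m^2$ in Fourier, using \eqref{phiGrowth} to reduce it to the $L^2$ norm of $\sum \frac{a_n\overline{F_{-n}}}{\sqrt{1+\left|\frac{2i\pi n}{L}\right|^{2s}}}\Lambda^\lambda_n$, and then invoking the Riesz basis property of $(\Lambda^\lambda_n)$ together with the $s$-growth of $(F_n)$. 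You instead exhibit $(k_{n,\lambda}^s)$ at once as the image of the orthonormal basis $(e_n^s)$ of $H^s_{per}$ under the factorized isomorphism $\Phi=\frac{L}{1-e^{-\lambda L}}\,M_\varphi\circ M_{e^{-\lambda\cdot}}\circ S$; the same two ingredients appear (the two-sided bound \eqref{phiGrowth} makes the multiplier $g\mapsto g\star\varphi$ an isomorphism of $L^2$ onto $H^m_{per}$, and the $s$-growth of $(F_n)$ at index $-n$ makes the diagonal factor $S$ bounded above and below), but you get completeness for free and avoid the finite-support computation; your identity $\Lambda^\lambda_n=\frac{L}{1-e^{-\lambda L}}e^{-\lambda\cdot}e_{-n}$ and the verification $\Phi(e_n^s)=k_{n,\lambda}^s$ are exact. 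Two minor points. Since the paper's definition of a Riesz basis uses an orthonormal basis of the space $H$ itself, you should either compose $\Phi$ with the unitary $H^m_{per}\to H^s_{per}$ sending $e_n^m$ to $e_n^s$, or invoke the standard fact that the image of an orthonormal basis of any separable Hilbert space under an isomorphism onto $H$ is a Riesz basis of $H$ --- purely cosmetic. More substantively, the paper's computation tracks the constants in \eqref{RieszBounds}, which are reused in Corollary \ref{TInvert} to bound $\vertiii{T^{\lambda}}$ and $\vertiii{(T^{\lambda})^{-1}}$ and ultimately the constant in the stabilization estimate; your factorization yields such constants just as easily (record $\vertiii{M_\varphi}\le C$, $\vertiii{M_\varphi^{-1}}\le 1/c$, $\vertiii{M_{e^{-\lambda\cdot}}}=1$, $\vertiii{M_{e^{-\lambda\cdot}}^{-1}}=e^{\lambda L}$, and the growth constants of $(F_n)$ for $S$), but as written you do not state them, so you would need to add this bookkeeping for the quantitative sequel.
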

\begin{proof}
We use the characterization of Riesz bases given in Proposition \ref{Riesz}. 
First, let us prove the completeness of $(k_{n, \lambda}^s)$. Let $f\in H^m_{per}$ be such that
$$\langle f, k_{n, \lambda}^s \rangle_{m} =0, \quad \forall n \in \Z.$$
Then for all $n\in \Z$ we get
$$0=\langle \Lambda_{n}^{\lambda} \star \varphi, f \rangle_{m}=\sum_{p\in \Z} \frac{\overline{f_p} \varphi_p}{\lambda_{n+p}} \left(1+\left|\frac{2i \pi p}{L}\right|^{2m}\right)=\left\langle \Lambda_{n}^{\lambda}, \sum_{p\in Z} \left(1+\left|\frac{2i \pi p}{L}\right|^{2m}\right)f_p\overline{\varphi_p} e_p \right\rangle,$$
as, thanks to \eqref{phiGrowth}, and using the fact that $f\in H^m_{per}$, 
$$ \sum_{p\in Z} \left(1+\left|\frac{2i \pi p}{L}\right|^{2m}\right)f_p\overline{\varphi_p} e_p \in L^2.$$
Now, $(\Lambda_{n}^{\lambda})$ is a complete family of $L^2$, as it is a Riesz basis, so that
$$f_p \varphi_p = 0, \quad \forall p \in \Z.$$
Recalling condition \eqref{phiGrowth}, this yields
$$ f_p  = 0, \quad \forall p \in \Z,$$
which proves the completeness of $(k_{n, \lambda}^s)$.

Now let $I \subset \Z$ be a finite set, and $(a_n) \in \CC^I$. Then,
{\def\arraystretch{3}$$\begin{array}{rcl}
\left\|\sum_{n\in I} a_n k_{n, \lambda}^s\right\|_m^2 & =& \left\| \sum_{n\in I} -a_n \frac{\overline{F_{-n}}}{\sqrt{1+\left|\frac{2i\pi n}{L}\right|^{2s}}} \Lambda^{\lambda}_n \star \varphi \right\|_{m}^2 \\
&=& \left\| \sum_{n\in I} a_n \frac{\overline{F_{-n}}}{\sqrt{1+\left|\frac{2i\pi n}{L}\right|^{2s}}}   \sum_{p \in \Z} \frac{\varphi_p}{\lambda_{n+p}} e_p\right\|_{m}^2 \\
&=& \left\|  \sum_{p \in \Z} \varphi_p\sum_{n\in I} \frac{ a_n \overline{F_{-n}}}{\lambda_{n+p}\sqrt{1+\left|\frac{2i\pi n}{L}\right|^{2s}}} e_p\right\|_{m}^2 \\
&=&  \sum_{p \in \Z} \left(1+\left|\frac{2\pi p}{L} \right|^{2m}\right) |\varphi_p|^2\left|\sum_{n\in I} \frac{ a_n \overline{F_{-n}}}{\lambda_{n+p}\sqrt{1+\left|\frac{2i\pi n}{L}\right|^{2s}}} \right |^2.
\end{array}$$}
Now, using condition \eqref{phiGrowth}, we have
$$c^2 \sum_{p \in \Z}\left|\sum_{n\in I} \frac{ a_n \overline{F_{-n}}}{\lambda_{n+p}\sqrt{1+\left|\frac{2i\pi n}{L}\right|^{2s}}} \right|^2 
\leq \left\|\sum_{n\in I} a_n k_{n, \lambda}^s\right\|_{m}^2 
\leq C^2 \sum_{p \in \Z}\left|\sum_{n\in I} \frac{ a_n \overline{F_{-n}}}{\lambda_{n+p}\sqrt{1+\left|\frac{2i\pi n}{L}\right|^{2s}}} \right|^2, $$
where $c, C>0$ are the decay constants in condition \eqref{phiGrowth}.

This last inequality can be rewritten 
$$c^2 \left\|\sum_{n\in I} \frac{ a_n \overline{F_{-n}}}{\sqrt{1+\left|\frac{2i\pi n}{L}\right|^{2s}}}\Lambda^{\lambda}_n \right\|^2 
\leq \left\|\sum_{n\in I} a_n k_{n, \lambda}^s\right\|_{m}^2 
\leq C^2  \left\|\sum_{n\in I} \frac{ a_n \overline{F_{-n}}}{\sqrt{1+\left|\frac{2i\pi n}{L}\right|^{2s}}} \Lambda^{\lambda}_n \right\|^2 , $$
as 
$$\Lambda^{\lambda}_n=\sum_{p\in \Z} \frac1{\lambda_{n+p} }e_p.$$
We now use the fact that $(\Lambda^{\lambda}_n)$ is a Riesz basis of $L^2$: indeed, it is the image of the Hilbert basis $(e_n)$ by the isomorphism 
$$\Lambda^{\lambda} : f\in L^2 \mapsto{\color{black}\Lambda^\lambda_0} f.$$
The norms of $\Lambda^{\lambda}$ and its inverse are rather straightforward to compute using piecewise constant functions, we have
$$\vertiii {\Lambda^{\lambda}}  =\frac{\sqrt{L}}{1-e^{-\lambda L}},$$
$$\vertiii {(\Lambda^{\lambda})^{-1}}  = \frac{1-e^{-\lambda L}}{\sqrt{L}} e^{\lambda L},$$
so that
$$\frac{1}{\vertiii {(\Lambda^{\lambda})^{-1}} ^2}\sum_{n\in I} \left|\frac{ a_n \overline{F_{-n}}}{\sqrt{1+\left|\frac{2i\pi n}{L}\right|^{2s}}}\right|^2 \leq \left\|\sum_{n\in I} \frac{ a_n \overline{F_{-n}}}{\sqrt{1+\left|\frac{2i\pi n}{L}\right|^{2s}}}\Lambda^{\lambda}_n \right\|^2 \leq \vertiii {\Lambda^{\lambda}} ^2\sum_{n\in I} \left|\frac{ a_n \overline{F_{-n}}}{\sqrt{1+\left|\frac{2i\pi n}{L}\right|^{2s}}}\right|^2,$$
 and we finally get, using the fact that $(F_n)$ has $s$-growth,
$$c^2C_1^2 \frac{1}{\vertiii {(\Lambda^{\lambda})^{-1}} ^2}\sum_{n\in I} |a_n|^2 \leq \left\|\sum_{n\in I} a_n k_{n, \lambda}^s\right\|_{m}^2 \leq C^2C_2^2 \vertiii {\Lambda^{\lambda}} ^2\sum_{n\in I} |a_n|^2.$$
where $C_1, C_2 >0$ are the growth constants of $(F_n)$, so that the constants in the inequalities above are optimal. Hence, using again point 2. of Proposition \ref{Riesz}, $(k_{n, \lambda}^s)$ is a Riesz basis of $H^{m}_{per}$. 
\end{proof}

We now have candidates for the backstepping transformation, under some conditions on $F$:
\begin{crllr}\label{TInvert}
Let $m\in \NN^\ast$, and $F$ such that $(F_n)$ has $m$-growth, with growth constants $C_1, C_2>0$. Define
\begin{equation}\label{T}
\quad T^{\lambda} \alpha := \sum_{n\in \Z} \sqrt{1+\left|\frac{2i\pi n}{L}\right|^{2m}} \alpha_n k_{-n, \lambda}^{m}=\sum_{n\in \Z} \alpha_n k_{-n, \lambda} \in H^m_{per}, \quad \forall \alpha \in H^{m}_{per}, 
\end{equation}
where $\alpha=\sum_{n\in \Z} \alpha_n e_n$.
Then, $T^{\lambda}:H^{m}_{per} \rightarrow H^m_{per}$ is an isomorphism. Moreover,
\begin{equation}\label{Norms}
\begin{aligned}
\vertiii {T^{\lambda}} &{\color{black}\leq}  \frac{CC_2\sqrt{L}}{1-e^{-\lambda L}}, \\
\vertiii {(T^{\lambda})^{-1}} &{\color{black}\leq} \frac{1-e^{-\lambda L}}{cC_1  \sqrt{L}} e^{\lambda L}.
\end{aligned}
\end{equation}
\end{crllr}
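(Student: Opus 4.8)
The plan is to read the statement essentially off Proposition~\ref{RieszFam} applied with $s=m$, since the only genuine work has already been done there. First I would translate the definition \eqref{T} into the Hilbert basis $(e_n^m)$ of $H^m_{per}$: writing $\alpha=\sum_n\alpha_n e_n\in H^m_{per}$ is the same as writing $\alpha=\sum_n a_n e_n^m$ with $a_n:=\sqrt{1+\left|\frac{2i\pi n}{L}\right|^{2m}}\,\alpha_n$ and $\|\alpha\|_m^2=\sum_n|a_n|^2$, and then \eqref{T} says exactly $T^\lambda\big(\sum_n a_n e_n^m\big)=\sum_n a_n k_{-n,\lambda}^m$; in particular $T^\lambda e_p^m=k_{-p,\lambda}^m$. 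Thus $T^\lambda$ sends the orthonormal basis $(e_p^m)_{p\in\Z}$ of $H^m_{per}$ onto the family $(k_{-p,\lambda}^m)_{p\in\Z}$, which is merely a reindexing of the Riesz basis $(k_{n,\lambda}^m)_{n\in\Z}$ of $H^m_{per}$ furnished by Proposition~\ref{RieszFam} --- this is the only place where the $m$-growth hypothesis on $(F_n)$ enters.

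Next I would make the boundedness and the first estimate in \eqref{Norms} precise by a density argument on $\test$ (which is dense in $H^m_{per}$, being the span of the Hilbert basis $(e_n^m)$). On finitely supported sequences $(a_n)$ the computation carried out inside the proof of Proposition~\ref{RieszFam} yields the two-sided bound
\[
\frac{c^2C_1^2}{\vertiii{(\Lambda^{\lambda})^{-1}}^2}\sum_n|a_n|^2\;\leq\;\Big\|\sum_n a_n k_{-n,\lambda}^m\Big\|_m^2\;\leq\;C^2C_2^2\,\vertiii{\Lambda^{\lambda}}^2\sum_n|a_n|^2,
\]
with $c,C$ the growth constants of $\varphi$ from \eqref{phiGrowth}, $C_1,C_2$ those of $(F_n)$, and $\vertiii{\Lambda^\lambda}=\sqrt L/(1-e^{-\lambda L})$, $\vertiii{(\Lambda^\lambda)^{-1}}=(1-e^{-\lambda L})e^{\lambda L}/\sqrt L$. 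The upper inequality shows that for every $\alpha\in H^m_{per}$ the partial sums of $\sum_n\alpha_n k_{-n,\lambda}$ form a Cauchy sequence in $H^m_{per}$, so $T^\lambda$ is well defined on all of $H^m_{per}$ with values in $H^m_{per}$ (each $k_{n,\lambda}^m$ lies in $H^m_{per}$ by \eqref{kndef}), agrees with the formula \eqref{T} which a priori only makes literal sense on $\test$, and satisfies $\vertiii{T^\lambda}\leq CC_2\sqrt L/(1-e^{-\lambda L})$.

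Finally I would obtain invertibility and the bound on the inverse. The lower inequality above reads $\|T^\lambda\alpha\|_m\geq \frac{cC_1\sqrt L}{(1-e^{-\lambda L})e^{\lambda L}}\|\alpha\|_m$, so $T^\lambda$ is injective with closed range; since a Riesz basis is complete, the range of $T^\lambda$ is dense in $H^m_{per}$, hence equal to it, and $T^\lambda$ is an isomorphism. Rearranging the same inequality gives $\vertiii{(T^\lambda)^{-1}}\leq (1-e^{-\lambda L})e^{\lambda L}/(cC_1\sqrt L)$, the second estimate in \eqref{Norms}. I do not foresee a real obstacle here: once Proposition~\ref{RieszFam} is granted, the corollary is just the observation that $T^\lambda$ is precisely the isomorphism implementing that Riesz basis, together with the bookkeeping of the constants already computed in its proof; the only point deserving a line of care is the passage from the formula on $\test$ to a bounded operator on $H^m_{per}$, which is supplied by the upper Riesz bound.
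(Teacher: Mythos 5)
Your proposal is correct and follows essentially the same route as the paper: the paper's own proof of Corollary \ref{TInvert} simply invokes the Riesz basis property of $(k_{-n,\lambda}^m)$ from Proposition \ref{RieszFam} (with $s=m$) and takes the norm bounds \eqref{Norms} from the two-sided estimates computed in that proposition's proof, exactly as you do. Your additional details (the identification $T^\lambda e_p^m=k_{-p,\lambda}^m$, the density/Cauchy argument, and the closed-range-plus-completeness step for surjectivity) are just an explicit write-up of what the paper leaves implicit.
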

\begin{proof}
The invertibility of $T^{\lambda}$ is clear thanks to the Riesz basis property of $( k_{-n, \lambda}^{m})$, and \eqref{Norms} comes from {\color{black}the above calculations}.
\end{proof}

\subsection{Definition of the feedback law}\label{sec-feedback-def}

In order to further determine the feedback law, and define our final candidate for the backstepping transformation, the idea is now to return to the $TB=B$ condition \eqref{TB}, as we have used it in the formal computations of section \ref{FindKernel}, in the equation \eqref{TB}. However, in this case, $\varphi \notin H^m_{per}$, and so it is not clear whether $T^\lambda \varphi$ is well-defined.

We can nonetheless obtain a $TB=B$ condition in some weak sense: indeed, let us set
$$\varphi^{(N)}:= \sum_{n=-N}^N \varphi_n e_n \in H^m_{per}, \quad \forall N \in \NN .$$
Then, 
$$ \varphi^{(N)} \xrightarrow[N \to \infty]{H^{m-1}} \varphi$$
and
$$\begin{array}{rcl}
T^{\lambda} \varphi^{(N)} &= &\sum_{n=-N}^N -\varphi_n \overline{F_{n}} \Lambda^{\lambda}_{-n} \star \varphi  \\
&=& \sum_{n=-N}^N \sum_{p \in \Z} \frac{-\varphi_n \overline{F_{n}}  \varphi_p}{\lambda_{-n+p}} e_p \\
&=& \sum_{p \in \Z}  \varphi_p \left(\sum_{n=-N}^N \frac{-\varphi_n \overline{F_{n}} }{\lambda_{-n+p}}\right) e_p.
\end{array}
$$
Now, notice that one can apply the Dirichlet convergence theorem for Fourier series (see for example \cite{Kahane}) to $\Lambda^{\lambda}_p, p \in \Z$ at $0$:
$$\sum_{n=-N}^N \frac{1 }{\lambda_{-n+p}} = \sum_{n=-N}^N \frac{1 }{\lambda_{n+p}}  \xrightarrow[N \to \infty]{} \sqrt{L}\frac{\Lambda^{\lambda}_p (0) + \Lambda^{\lambda}_p (L)}2 = \frac{L}{1-e^{-\lambda L}} \frac{1+e^{-\lambda L}}{2}. $$
Let us note
$$K(\lambda):=\frac2{L} \frac{1-e^{-\lambda L}}{1+e^{-\lambda L}},$$
and set
\begin{equation}\label{Feedback}
F_n:=-\frac{K(\lambda)}{\overline{\varphi_n}}, \quad \forall n \in \Z.
\end{equation}
This defines a feedback law $F\in \dist$ which is real-valued, as $\varphi$ is real-valued, and which has $m$-growth thanks to condition \eqref{phiGrowth}, so that $T^{\lambda}$ is a valid backstepping transformation. Moreover,
\begin{equation}
    \label{WeakTB}
     \langle T^{\lambda} \varphi^{(N)}, e_p \rangle = \varphi_p K(\lambda) \sum_{n=-N}^N \frac{1 }{\lambda_{-n+p}}\xrightarrow[N \to \infty]{} \varphi_p, \quad \forall p \in \Z,
\end{equation}
which corresponds to the $TB=B$ condition in some weak sense. 

With this feedback law, the backstepping transformation now writes
\begin{equation}\label{TFeedback}
 T^{\lambda} \alpha = \sum_{n\in \Z}  \alpha_n k_{-n, \lambda}, \quad \forall \alpha \in H^{m}_{per},
\end{equation}
and 
\begin{equation}\label{NormsFeedback}
\begin{aligned}
\vertiii {T^{\lambda}} &{\color{black}\leq}  \frac{CK(\lambda)\sqrt{L}}{c(1-e^{-\lambda L})}, \\
\vertiii {(T^{\lambda})^{-1}} &{\color{black}\leq}  \frac{C(1-e^{-\lambda L})}{cK(\lambda) \sqrt{L}} e^{\lambda L}.
\end{aligned}
\end{equation}

\subsection{Regularity of the feedback law}\label{section-feedback-reg}

Finally, in order to study the well-posedness of the closed-loop system corresponding to \eqref{Feedback}, we need some information on the regularity of $F$.

Let us first begin by a general lemma for linear forms with coefficients that have $m$-growth:
\begin{lmm}\label{Fcont}
Let $m \geq 0$, and $G\in \dist$ with $m$-growth.

Then, for all $s>1/2$, $G$ is defined on $H^{m+s}_{per}$, is continuous for $\|\cdot\|_{m+s}$, but not for $\norm_{m+\sigma}$, for $-m \leq \sigma < 1/2$.

In particular, the feedback law $F\in \dist$ defined by \eqref{Feedback} defines a linear form on $H^{m+1}_{per}$ which is continuous for $\norm_{m+1}$ but not for $\norm_{m}$.
\end{lmm}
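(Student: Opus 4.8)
The plan is to establish the three claims — definedness and continuity on $H^{m+s}_{per}$ for $s>1/2$, and failure of continuity for $\norm_{m+\sigma}$ with $-m\le\sigma<1/2$ — by a direct computation in Fourier coefficients, reducing everything to the scalar quantities $\langle e_n, G\rangle = \overline{G_n}$ and the basis $(e_n^{m+s})$ of $H^{m+s}_{per}$ introduced in the Definition.

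First I would fix $s>1/2$ and show $G$ defines a continuous linear form on $H^{m+s}_{per}$. Writing $f=\sum_n f_n e_n\in H^{m+s}_{per}$, the pairing is $\langle f,G\rangle = \sum_n \overline{G_n} f_n$, and I want to bound this by $\|f\|_{m+s}$. Using the upper bound in the $m$-growth condition \eqref{FGrowth}, $|G_n|\le C\sqrt{1+|2i\pi n/L|^{2m}}$, I would apply Cauchy–Schwarz in the weighted form
\[
\Big|\sum_n \overline{G_n} f_n\Big| \le \Big(\sum_n \frac{|G_n|^2}{1+|2i\pi n/L|^{2(m+s)}}\Big)^{1/2}\Big(\sum_n \big(1+|2i\pi n/L|^{2(m+s)}\big)|f_n|^2\Big)^{1/2},
\]
and the first factor converges precisely because $|G_n|^2/(1+|2i\pi n/L|^{2(m+s)}) \sim |n|^{2m}/|n|^{2(m+s)} = |n|^{-2s}$ with $2s>1$. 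Thus $G$ is bounded on $H^{m+s}_{per}$ with norm controlled by $C$ times that $\ell^2$ sum, which also shows the pairing $\sum_n\overline{G_n}f_n$ converges absolutely — giving "defined on $H^{m+s}_{per}$."

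Next, for the negative statement, fix $\sigma$ with $-m\le\sigma<1/2$; I would exhibit a sequence in $H^{m+\sigma}_{per}$ (or $\mathcal{E}$) on which $G$ is unbounded relative to $\norm_{m+\sigma}$. The natural choice is the truncation argument: take $f^{(N)} := \sum_{|n|\le N} \frac{\overline{G_n}}{1+|2i\pi n/L|^{2(m+\sigma)}}\, e_n$, which lies in $\mathcal{E}\subset H^{m+\sigma}_{per}$. Then $\langle f^{(N)},G\rangle = \sum_{|n|\le N} |G_n|^2/(1+|2i\pi n/L|^{2(m+\sigma)})$ while $\|f^{(N)}\|_{m+\sigma}^2 = \sum_{|n|\le N} |G_n|^2/(1+|2i\pi n/L|^{2(m+\sigma)})$ as well, so the ratio $|\langle f^{(N)},G\rangle|/\|f^{(N)}\|_{m+\sigma}$ equals $\|f^{(N)}\|_{m+\sigma}$, which tends to $+\infty$ because the lower bound in \eqref{FGrowth} gives $|G_n|^2/(1+|2i\pi n/L|^{2(m+\sigma)}) \gtrsim |n|^{2m}/|n|^{2(m+\sigma)} = |n|^{-2\sigma}$ and $\sum |n|^{-2\sigma} = \infty$ since $2\sigma<1$. (For $\sigma<0$ the terms do not even go to zero; for $0\le\sigma<1/2$ the series is a divergent $p$-series.) Finally, the last sentence follows by specializing: the feedback $F$ of \eqref{Feedback} has $m$-growth by \eqref{phiGrowth}, so apply the lemma with $G=F$, $s=1$ (continuous on $H^{m+1}_{per}$) and $\sigma=0$ (not continuous on $H^m_{per}$).

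The only mildly delicate point — not really an obstacle — is making sure the weighted-$\ell^2$ asymptotics $|G_n|^2/(1+|2i\pi n/L|^{2t}) \asymp |n|^{2(m-t)}$ are handled cleanly, including the $n=0$ term, and that the chosen test sequence genuinely lies in the right periodic Sobolev space (which is automatic since each $f^{(N)}\in\mathcal{E}$, all of whose elements are smooth and periodic). Everything else is Cauchy–Schwarz and the comparison of the two inequalities in \eqref{FGrowth}.
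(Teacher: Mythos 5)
Your argument is correct and its first half is essentially the paper's proof: the boundedness on $H^{m+s}_{per}$ is obtained there too by splitting the weight and applying Cauchy--Schwarz, with the convergent factor $\sum_n (1+|n|^s)^{-2}$. For the unboundedness you take a slightly different route: you use finite truncations $f^{(N)}\in\mathcal{E}$ with coefficients proportional to $G_n/(1+|2i\pi n/L|^{2(m+\sigma)})$, normalized so that the ratio $|\langle f^{(N)},G\rangle|/\|f^{(N)}\|_{m+\sigma}$ equals $\|f^{(N)}\|_{m+\sigma}$ and diverges because $\sum_n |G_n|^2/(1+|2i\pi n/L|^{2(m+\sigma)})\gtrsim\sum_n|n|^{-2\sigma}=\infty$; the paper instead uses tail series $\gamma^{(N)}=\sum_{|n|\ge N}\big(\overline{G_n}(1+|n|^{1+s})\big)^{-1}e_n$, which lie in $H^{m+s}_{per}$ itself, and compares the pairing with the $\|\cdot\|_{m+\sigma}$-norm of the tail. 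Your version is more elementary (finite sums, exact ratio identity, no tail estimates), while the paper's exhibits the failure of continuity directly on elements of the domain $H^{m+s}_{per}$ rather than on trigonometric polynomials -- both suffice for the statement. One small correction: with the paper's convention $\langle f,G\rangle=\sum_n \overline{G_n}f_n$, your choice $f_n=\overline{G_n}/(1+|2i\pi n/L|^{2(m+\sigma)})$ gives $\sum_n \overline{G_n}^{\,2}/(1+\cdots)$, whose terms could cancel; take $f_n=G_n/(1+|2i\pi n/L|^{2(m+\sigma)})$ instead so the pairing is the positive sum $\sum_n |G_n|^2/(1+\cdots)$ as intended.
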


\begin{proof}
Let $s > 1/2$, and let $\alpha\in H^{m+s}_{per}$. Using the growth conditions \eqref{FGrowth}, we can do the following computations for $ \alpha \in H^{m+s}_{per}$:
$$\begin{array}{rcl}
\sum_{n\in \Z} |G_n||\alpha_n| 
&\leq & C\sum_{n\in \Z} \sqrt{1+\left|\frac{2i \pi n}{L}\right|^{2m}}|\alpha_n| \\
&\leq & C^\prime \sum_{n\in \Z} \frac{1}{1+|n|^s}  \sqrt{1+\left|\frac{2i \pi n}{L}\right|^{2m+2s}} |\alpha_n|\\
&\leq & C^\prime \left(\sqrt{\sum_{n\in \Z} \frac{1}{(1+|n|^{s})^2}}\right) \|\alpha\|_{m +s}
\end{array}$$
where $C, C^\prime>0$ are constants that do not depend on $\alpha$, and where the last inequality is obtained using the Cauchy-Schwarz inequality. Thus $G$ is defined on $H^{m+s}_{per}$ by
$$\langle \alpha, G \rangle := \sum_{n\in \Z} G_n\alpha_n, \quad \forall \alpha \in H^{m+s}_{per},$$ 
and $G$ is continuous on $H^{m+s}_{per}$. 

On the other hand, let $-m \leq \sigma < 1/2$, and consider, for $N \geq 1$,
$$\gamma^{(N)}:= \sum_{|n|\geq N}\frac{1}{\overline{G_n}\left(1+|n|^{1+s}\right) } e_n \in H^{m +s}_{per}.$$ 
We have 
$$\|\gamma^{(N)} \|_{m + \sigma}^2 = \sum_{|n|\geq N} \frac{\left(1+\left|\frac{2i \pi n}{L}\right|^{2m+2\sigma}\right)}{|G_n|^2}\frac{1}{\left(1+|n|^{1 + s}\right)^2 } \leq C\sum_{|n|\geq N} \frac{1}{1+|n|^{2+2s-2\sigma} }$$
for some constant $C>0$.
Then, 
$$\begin{array}{rcl}
|\langle \gamma^{(N)}, G \rangle | & = & \sum_{|n|\geq N} \frac1{1+|n|^{1 + s}} \\
&\geq& c\sum_{|n| \geq N} |n|^{1+s-2\sigma} \frac1{1+|n|^{2+2s-2\sigma}} \\
&\geq & c N^{1+s-2\sigma} \sum_{|n| \geq N} \frac1{1+|n|^{2+2s-2\sigma}} \\
&\geq & c^\prime N^{1+s-2\sigma} \sqrt{\sum_{|n| \geq N} \frac1{1+|n|^{2+2s-2\sigma}} }\|\gamma^{(N)}\|_{m+\sigma}
\end{array}$$
for some constants $c, c^\prime >0$. Now, we know that there exists constants $c^{\prime\prime}, C^{\prime\prime} >0$ such that 
$$ \frac{c^{\prime\prime}}{N^{1+2s-2\sigma}}\leq \sum_{|n| \geq N} \frac1{1+|n|^{2+2s-2\sigma}}  \leq \frac{C^{\prime\prime}}{N^{1+2s-2\sigma}}, $$
So that 
$$N^{1+s-2\sigma}\sqrt{\sum_{|n| \geq N} \frac1{1+|n|^{2+2s-2\sigma}} } \geq c^{\prime\prime} N^{\frac12-\sigma} \xrightarrow[N \to \infty]{} \infty.$$
This proves that $G$ is not continuous for$\norm^{m + \sigma}$.
\end{proof}

\color{black} Let us now give a more precise description of the domain of definition and regularity of $F$. Recalling the identity \eqref{DAphi}, we can derive the following identity for $F_n$ from \eqref{Feedback}:
\begin{equation}\label{DAF} F_n =  (-1)^{m}\frac{K(\lambda)}{\tau^\varphi_{-n}} \left(\frac{2i \pi n}{L}\right)^m +(-1)^m \frac{K(\lambda)}{\tau^\varphi_{-n}} \left(\frac{2i \pi n}{L}\right)^m \frac{\overline{\displaystyle{\sum_{j=0}^{d}\left\langle \chi_{[\sigma_j, \sigma_{j+1}]}\partial^m \varphi, e_n\right\rangle}}}{\tau^\varphi_{-n}-\overline{\displaystyle{\sum_{j=0}^{d}\left\langle \chi_{[\sigma_j, \sigma_{j+1}]}\partial^m \varphi, e_n\right\rangle}}}, \quad \forall n \in \Z^\ast,
\end{equation}
so that
\begin{equation}\label{RegularPart}\left(\frac1{\left(\frac{2i\pi n}{L}\right)^m}\left(F_n -(-1)^{m} \frac{K(\lambda)}{\tau^\varphi_{-n}} \left(\frac{2i \pi n}{L}\right)^m  \right)\right)_{n\in \Z^\ast} \in \ell^2.\end{equation}

Let us then note 
$$ h_n:=(-1)^{m}\frac{K(\lambda)}{\tau^\varphi_{-n}} \left(\frac{2i \pi n}{L}\right)^m,\quad \forall n \in \Z, $$
and $h$ the associated linear form in $\dist$. 

\begin{prpstn}\label{FeedbackReg}
The linear form $h$ defines the following linear form on $\tau^\varphi(H^{m+1}_{(pw)})$, continuous for $\|\cdot\|_{m+1,pw}$: 
\begin{equation}\label{SingularPart} \langle \alpha, h \rangle=\sqrt{L}\frac{K(\lambda)}2 \left(\partial^m\left((\tau^\varphi)^{-1}\alpha\right)(0)+\partial^m\left((\tau^\varphi)^{-1}\alpha\right)(L)\right) , \quad \forall \alpha \in {\color{black}\tau^\varphi(H^{m+1}_{(pw)})}.\end{equation}
Moreover, $\tilde{F}:=F-h$ is continuous for $\|\cdot\|_m$, so that $F$ is defined on $\tau^\varphi(H^{m+1}_{(pw)})\cap H^m_{per}$, and is continuous for $\|\cdot\|_{m+1, pw}$, but not $\|\cdot\|_m$.
\end{prpstn}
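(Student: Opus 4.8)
The plan is to split the linear form $F$ into the ``singular'' part $h$ and the ``regular'' remainder $\tilde{F} := F - h$, and analyze each separately, exactly as the decomposition \eqref{DAF}--\eqref{RegularPart} suggests. For the regular part, \eqref{RegularPart} says that the sequence $\left(\left(\frac{2i\pi n}{L}\right)^{-m}\tilde{F}_n\right)_{n\in\Z^\ast}$ lies in $\ell^2$; combined with finitely many low-order terms this means $\tilde{F}$ has a representation whose coefficients decay like those of an $H^{-m}$ distribution, and a direct Cauchy--Schwarz argument (the same computation as in the first half of the proof of Lemma \ref{Fcont}, with $s=0$) shows $\langle\alpha,\tilde F\rangle = \sum_n \tilde F_n\alpha_n$ converges and is bounded by $C\|\alpha\|_m$ for $\alpha\in H^m_{per}$. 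This gives continuity of $\tilde F$ for $\|\cdot\|_m$.

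For the singular part $h$, I would first observe that $h = h\circ\tau^\varphi\circ(\tau^\varphi)^{-1}$, and since $(\tau^\varphi)^{-1}$ is an isomorphism of every periodic Sobolev space (and of $H^{m+1}_{(pw)}$, being a sum of translations), it suffices to identify $h\circ\tau^\varphi$ on $H^{m+1}_{(pw)}$. By definition of the coefficients $\tau^\varphi_n$ and $h_n$, the composition $h\circ\tau^\varphi$ has $n$-th coefficient $(-1)^m K(\lambda)\left(\frac{2i\pi n}{L}\right)^m$, i.e. it is (up to the constant $K(\lambda)$) the operator $\beta\mapsto (-1)^m\partial^m\beta$ evaluated against the ``average of Dirac masses at $0$ and $L$''. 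Concretely, for $\beta\in H^{m+1}_{(pw)}$ I would write $\partial^m\beta \in H^1_{(pw)}\subset L^\infty$ (so its one-sided values at $0$ and $L$ make sense), expand $\partial^m\beta$ in Fourier series, and use the Dirichlet pointwise convergence theorem for Fourier series — applied at the points $0$ and $L$, exactly as it was used for $\Lambda^\lambda_p$ in Section \ref{sec-feedback-def} — to get
$$\sum_{n=-N}^N (-1)^m\left(\tfrac{2i\pi n}{L}\right)^m \beta_n \longrightarrow \sqrt{L}\,\frac{\partial^m\beta(0^+) + \partial^m\beta(L^-)}{2},$$
since $\beta$ is $H^{m+1}_{(pw)}$ hence $\partial^m\beta$ is continuous at every point except finitely many interior jumps, and in particular has well-defined limits at the endpoints. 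Multiplying by $K(\lambda)$ and substituting $\beta = (\tau^\varphi)^{-1}\alpha$ gives formula \eqref{SingularPart}. Continuity for $\|\cdot\|_{m+1,pw}$ follows because the trace maps $\beta\mapsto\partial^m\beta(0^+),\partial^m\beta(L^-)$ are continuous from $H^{m+1}_{(pw)}$ (via $H^1(\sigma_0,\sigma_1)\hookrightarrow C^0$ and $H^1(\sigma_d,\sigma_{d+1})\hookrightarrow C^0$) and $(\tau^\varphi)^{-1}$ is bounded on $H^{m+1}_{(pw)}$.

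Finally I would assemble: $F = h + \tilde F$ is defined on $\tau^\varphi(H^{m+1}_{(pw)})\cap H^m_{per}$ (the intersection on which both pieces make sense), and continuous for $\|\cdot\|_{m+1,pw}$ on that space since both $h$ and $\tilde F$ are (the latter because $\|\cdot\|_m \leq \|\cdot\|_{m+1,pw}$ up to a constant on this space). Non-continuity for $\|\cdot\|_m$ is inherited from Lemma \ref{Fcont}: $F$ has $m$-growth, so it cannot be continuous for $\|\cdot\|_m$, and this is unaffected by subtracting the $\|\cdot\|_m$-continuous $\tilde F$. The main obstacle I anticipate is the rigorous justification of the pointwise Fourier convergence at the endpoints for a merely piecewise-$H^{m+1}$ function — one must check that the Dirichlet theorem hypotheses hold at $0$ and $L$ (where the relevant function $\partial^m\beta$ is continuous from one side in $[0,L]$, but across the periodic identification may have a jump coming from $\partial^m\beta(L^-)$ versus $\partial^m\beta(0^+)$), which is precisely why the symmetric average of the two one-sided values appears; handling the interior jump points $\sigma_j$ and bookkeeping the constant $\sqrt L$ from the normalization of $e_n$ is routine but must be done carefully.
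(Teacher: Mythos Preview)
Your approach is correct and matches the paper's: both split $F=h+\tilde F$, handle $\tilde F$ via \eqref{RegularPart} and Cauchy--Schwarz, and identify $h$ via the Dirichlet pointwise convergence theorem after composing with $(\tau^\varphi)^{-1}$. The paper differs only in two minor points: it first establishes \eqref{SingularPart} on the smaller space $\tau^\varphi(H^{m+2}_{(pw)})$ (so that $\partial^m\beta\in H^2_{(pw)}$ and the Dirichlet theorem applies in its most classical form) and then extends to $\tau^\varphi(H^{m+1}_{(pw)})$ by density using the Sobolev embedding $H^1\hookrightarrow C^0$, and it deduces non-continuity for $\|\cdot\|_m$ from the non-continuity of the trace maps $\alpha\mapsto\partial^m\alpha(0),\ \partial^m\alpha(L)$ rather than by invoking Lemma~\ref{Fcont}.
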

\begin{proof}
It is clear, by definition of $H^m_{per}$, and using \eqref{RegularPart}, that for $\alpha \in H^m_{per}$, the expression:
\begin{equation}
    {\color{black}\langle \alpha, F-h \rangle = \sum_{n\in \Z} \alpha_n (\overline{F_n}-\overline{h_n}) = \frac{K(\lambda)\alpha_0}{\varphi_0} + \sum_{n \neq 0} \left(\frac{2i\pi n}{L}\right)^m\alpha_n  \frac1{\left(\frac{2i\pi n}{L}\right)^m}(\overline{F_n}{\color{black}-}\overline{h_n}) }
\end{equation}
defines a continuous linear form on $H^m_{per}$. 

On the other hand, let $\alpha \in \tau^\varphi (H^{m+2}_{(pw)})$, then
$$\sum_{n=-N}^N \alpha_n \overline{h_n} = \sqrt{L}K(\lambda)\sum_{n=-N}^N \left(\frac{2i \pi n}{L}\right)^m\frac{\alpha_n}{\tau^\varphi_n}\frac1{\sqrt{L}} $$
we can use the Dirichlet convergence theorem (see \cite{Kahane}) on $\partial^m \left((\tau^\varphi)^{-1}\alpha\right) \in H^2_{(pw)}$ at $0$, so that 
$$\begin{aligned}\sum_{n=-N}^N \alpha_n \overline{h_n} =  \sqrt{L}K(\lambda) &\sum_{n=-N}^N \left(\frac{2i \pi n}{L}\right)^m\frac{\alpha_n}{\tau^\varphi_n}\frac1{\sqrt{L}} \\ 
& \xrightarrow[N \to \infty]{}  \sqrt{L}\frac{K(\lambda)}2 \left( \partial^m\left((\tau^\varphi)^{-1}\alpha\right)(0)+\partial^m\left((\tau^\varphi)^{-1}\alpha\right)(L)\right).\end{aligned}$$
Now, we know that $H^{m+2}_{(pw)}$ is dense in $H^{m+1}_{(pw)}$ for the $H^{m+1}_{(pw)}$ norm. As $\tau^\varphi$ is a sum of translations, it is continuous for $\|\cdot\|_{m+1, pw}$, so that $\tau^\varphi(H^{m+2}_{(pw)})$ is dense in $\tau^\varphi(H^{m+1}_{(pw)})$ for $\|\cdot\|_{m+1, pw}$.

Moreover, using the Sobolev inequality for $H^1$ and $L^\infty$ (see for example \cite[Chapter 8, Theorem 8.8]{Brezis}), we get the continuity of $h$ for $\|\cdot\|_{m+1, pw}$, so that we can extend it from $\tau^\varphi(H^{m+2}_{(pw)})$ to $\tau^\varphi(H^{m+1}_{(pw)})$ by density. We also get that $h$ is not continuous for $\|\cdot\|_{m}$, as $\alpha \in H^m \mapsto \partial^m\alpha(0)$ and $\alpha \in H^m \mapsto \partial^m\alpha(L)$ are not continuous for $\|\cdot\|_{m}$.

Thus, $F=\tilde{F}+h$ is defined on $\tau^\varphi(H^{m+1}_{(pw)})\cap H^m_{per}$, is continuous for $\|\cdot\|_{m+1}$ but not for $\|\cdot\|_m$.
\end{proof}

\color{black}\section{Well-posedness and stability of the closed-loop system}
Let $m \geq 1$, $\varphi \in H^m_{(pw)} \cap H^{m-1}_{per}$ satisfying growth condition \eqref{phiGrowth}. Let the feedback law $F$ be defined by \eqref{Feedback}.
\subsection{Operator equality}

Now that we have completely defined the feedback $F$ and the transformation $T^\lambda$, let us check that we have indeed built a backstepping tranformation. 
As in the finite dimensional example of subsection \ref{FiniteD}, this corresponds to the formal operator equality
$$T(A+BK)=(A-\lambda I) T.$$

Let us define the following domain:
\begin{equation}\label{Domains}
\quad D_{m} := \left\{ \alpha \in \tau^\varphi(H^{m+1}_{(pw)})\cap H^m_{per}, \quad -\alpha_x - \mu \alpha +\langle \alpha, F \rangle \varphi \in H^m_{per}\right\}.
\end{equation}

Notice that, as $\varphi \in H^m_{(pw)}$, the condition $ \alpha \in H^{m+1}_{(pw)} \supset \tau^\varphi(H^{m+1}_{(pw)}) $ is necessary for $-\alpha_x - \mu \alpha +\langle \alpha, F \rangle \varphi$ to be in $H^m_{per}$ . 
Let us first check the following property:
\begin{prpstn}\label{Dense}
For $m \geq 1$, $D_{m}$ is dense in $H^{m}_{per}$ for $\norm_{m}$.
\end{prpstn}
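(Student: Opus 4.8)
The strategy is to exhibit a dense subspace of $H^m_{per}$ that is contained in $D_m$, built from the Fourier modes $e_n$. The natural candidate is the set $\mathcal{E}$ of finite linear combinations of the $(e_n)$, or a suitable modification of it: these are smooth and periodic, so each $e_n$ lies in $\tau^\varphi(H^{m+1}_{(pw)})\cap H^m_{per}$ (indeed $e_n\in H^k_{per}$ for every $k$, and $e_n=\tau^\varphi((\tau^\varphi)^{-1}e_n)$ with $(\tau^\varphi)^{-1}e_n\in H^{m+1}_{per}\subset H^{m+1}_{(pw)}$ since $\tau^\varphi$ is a diagonal isomorphism of every periodic Sobolev space). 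The one thing to check is the extra constraint in the definition \eqref{Domains}: we need $-\alpha_x-\mu\alpha+\langle\alpha,F\rangle\varphi\in H^m_{per}$. For $\alpha=e_n$ we have $-\alpha_x-\mu\alpha = -(\tfrac{2i\pi n}{L}+\mu)e_n\in H^m_{per}$, so the problematic term is $\langle e_n,F\rangle\varphi$, which is a scalar multiple of $\varphi$. Since $\varphi\in H^m_{(pw)}\cap H^{m-1}_{per}$ but $\varphi\notin H^m_{per}$ (as noted after \eqref{ResteDA}, because $\tau^\varphi_n\neq0$), a single mode $e_n$ is \emph{not} in $D_m$.

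The fix is to take finite combinations that kill the singular part of $\varphi$. Concretely, I would work with
$$
\mathcal{D} := \left\{\sum_{n\in I} a_n e_n \ :\ I\subset\Z \text{ finite},\ \sum_{n\in I} a_n \langle e_n, F\rangle = 0\right\}.
$$
For such an $\alpha$, $\langle\alpha,F\rangle=\sum_{n\in I}a_n\langle e_n,F\rangle=0$, so the constraint term vanishes identically and $-\alpha_x-\mu\alpha+\langle\alpha,F\rangle\varphi=-\alpha_x-\mu\alpha\in H^m_{per}$; hence $\mathcal{D}\subset D_m$. It remains to prove $\mathcal{D}$ is dense in $H^m_{per}$ for $\|\cdot\|_m$. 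This is where the main work is: $\mathcal{D}$ is the kernel of the linear functional $\alpha\mapsto\langle\alpha,F\rangle$ restricted to $\mathcal{E}$. Since $F$ has $m$-growth (by \eqref{phiGrowth} and \eqref{Feedback}), by Lemma 2.3 this functional is \emph{not} continuous for $\|\cdot\|_m$; a standard fact is that the kernel of a non-continuous linear functional on a normed space is dense. I would make this precise: given $f\in H^m_{per}$ and $\varepsilon>0$, truncate to $f^{(N)}=\sum_{|n|\le N}f_n e_n$ with $\|f-f^{(N)}\|_m<\varepsilon/2$; then, because $\alpha\mapsto\langle\alpha,F\rangle$ is unbounded on $H^m_{per}$, one can pick a mode $e_{n_0}$ with $|n_0|>N$ and $\langle e_{n_0},F\rangle\neq0$ whose $\|\cdot\|_m$-norm is small relative to $|\langle e_{n_0},F\rangle|$ — precisely, $\|e_{n_0}\|_m/|\langle e_{n_0},F\rangle|\to0$ as $|n_0|\to\infty$ since $|\langle e_{n_0},F\rangle|=K(\lambda)/|\varphi_{n_0}|\sim|n_0|^m$ grows like $\|e_{n_0}\|_m$ times $|n_0|^m/\!\sqrt{1+|2\pi n_0/L|^{2m}}\cdot(\ldots)$ — wait, more carefully $\|e_{n_0}\|_m=\sqrt{1+|2\pi n_0/L|^{2m}}$ and $|\langle e_{n_0},F\rangle|$ also grows like $|n_0|^m$, so the ratio stays bounded; instead I would use a normalized correction $\gamma^{(N)}$ as in the proof of Lemma 2.3, i.e. add a tail $\sum_{|n|>N}\frac{c_n}{\overline{F_n}}e_n$ with coefficients chosen so the tail has small $\|\cdot\|_m$-norm but prescribed (nonzero, then adjusted) value under $\langle\cdot,F\rangle$, and subtract the right multiple of a single far mode to restore $\langle\cdot,F\rangle=\langle f^{(N)},F\rangle$'s cancellation.

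Cleaner route, which I would actually write: set $g:=\langle f^{(N)},F\rangle\in\CC$ (a finite sum, well-defined). Choose $n_0$ with $|n_0|>N$; then $\alpha_N:=f^{(N)}-\frac{g}{\langle e_{n_0},F\rangle}e_{n_0}\in\mathcal{D}$, and $\|\alpha_N-f^{(N)}\|_m=\frac{|g|}{|\langle e_{n_0},F\rangle|}\|e_{n_0}\|_m$. The point is that $|g|$ is fixed while $\|e_{n_0}\|_m/|\langle e_{n_0},F\rangle|$ need not go to $0$ — so this crude correction is insufficient, confirming that the \textbf{main obstacle} is genuinely the unboundedness argument and not a one-line trick. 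The correct device is to spread the correction over many modes: replace the single $e_{n_0}$ by $\beta^{(M)}:=\sum_{|n|=N+1}^{N+M}\frac{1}{\overline{F_n}}e_n$, for which $\langle\beta^{(M)},F\rangle=\sum_{|n|=N+1}^{N+M}\frac{|F_n|^2}{\overline{F_n}\,\overline{\cdot}}$ — rather, $\langle\beta^{(M)},F\rangle=\sum_{|n|=N+1}^{N+M}\frac{F_n}{\overline{F_n}}$ — this is not obviously large either. The clean choice: $\beta^{(M)}:=\sum_{N<|n|\le N+M}\frac{\overline{F_n}}{|F_n|^2}e_n$ gives $\langle\beta^{(M)},F\rangle=\sum_{N<|n|\le N+M}1 = 2M$ (a count), while $\|\beta^{(M)}\|_m^2=\sum_{N<|n|\le N+M}\frac{1+|2\pi n/L|^{2m}}{|F_n|^4}\le C\sum_{N<|n|\le N+M}\frac{1}{|n|^{2m}}$, which is bounded uniformly in $M$ (and small for $N$ large) since $m\ge1$. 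Then $\alpha_{N,M}:=f^{(N)}-\frac{g}{2M}\beta^{(M)}\in\mathcal{D}$ and $\|\alpha_{N,M}-f^{(N)}\|_m=\frac{|g|}{2M}\|\beta^{(M)}\|_m\le\frac{|g|\sqrt{C}}{2M}\to0$ as $M\to\infty$. Combining, $\|f-\alpha_{N,M}\|_m\le\varepsilon/2+o(1)<\varepsilon$ for $N$ and then $M$ large enough. Hence $\mathcal{D}$, and a fortiori $D_m$, is dense in $H^m_{per}$. I expect the bookkeeping in estimating $\|\beta^{(M)}\|_m$ via the $m$-growth of $F$ (equivalently \eqref{phiGrowth}) to be the only real calculation, and it is routine.
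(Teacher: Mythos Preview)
Your approach is essentially the paper's: exhibit a subspace of $D_m$ of the form $\ker F\cap X$ for some $X$ dense in $H^m_{per}$, and argue that this kernel is dense because $F$ is not $\|\cdot\|_m$-continuous. The paper takes $X=H^{m+1}_{per}$ and simply invokes Lemma~\ref{Fcont} together with the standard fact that the kernel of a discontinuous linear form is dense; you take $X=\mathcal{E}$ and state the same abstract fact. At that point you are done, and your proof matches the paper's.

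Your subsequent attempt to make the density explicit contains two slips. First, with $a_n=\overline{F_n}/|F_n|^2=1/F_n$ one gets $\langle\beta^{(M)},F\rangle=\sum_{N<|n|\le N+M}\overline{F_n}/F_n$, a sum of unimodular complex numbers which is not $2M$ in general; the correct choice is $a_n=1/\overline{F_n}$. Second, with either choice $|a_n|^2=1/|F_n|^2$, so $\|\beta^{(M)}\|_m^2=\sum_{N<|n|\le N+M}(1+|2\pi n/L|^{2m})/|F_n|^2$, and by the $m$-growth of $F$ each summand is of order $1$: the sum is $\asymp M$, not bounded by $\sum 1/|n|^{2m}$. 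Fortunately this still gives $\|\beta^{(M)}\|_m\lesssim\sqrt{M}$, so your correction satisfies $\bigl\|\tfrac{g}{2M}\beta^{(M)}\bigr\|_m\lesssim|g|/\sqrt{M}\to 0$ and the conclusion survives. But the cleanest route is to stop at the abstract argument you already stated.
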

\begin{proof}
It is clear that $H^{m+1}_{per} \subset \tau^\varphi\left(H^{m+1}_{(pw)}\right)$, so that
$$\mathcal{K}_m:=\left\{\alpha \in H^{m+1}_{per}, \ \langle \alpha, F \rangle=0 \right\} \subset D_m.$$
Now, by Lemma \ref{Fcont}, as $F$ has $m$-growth, $\mathcal{K}_m$ is dense in $H^{m +1}_{per}$ for $\norm_{m}$, as the kernel of the linear form $F$ which is not continuous for $\norm_{m}$. As $H^{m +1}_{per}$ is dense in $H^{m}_{per}$, then $D_{m}$ is dense in $H^{m}_{per}$ for $\norm_{m}$.  
\end{proof}

Now, on this dense domain, let us establish the operator equality:
\begin{prpstn}\label{OpEqProp} 
\begin{equation}\label{OpEq}
 T^{\lambda} (-\partial_x - \mu I + \langle  \cdot, F \rangle \varphi) \alpha = (-\partial_x - \lambda^\prime I)T^{\lambda} \alpha \quad \textrm{in} \ H^{m}_{per}, \quad \forall \alpha \in D_{m}.
\end{equation}
\end{prpstn}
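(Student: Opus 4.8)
The natural strategy is to verify the operator equality \eqref{OpEq} coefficient-by-coefficient in the basis $(e_p)$, using the explicit formula \eqref{TFeedback} for $T^\lambda$ together with the kernel ODE \eqref{kn1} that the $k_{n,\lambda}$ satisfy by construction. I would start by fixing $\alpha\in D_m$ and writing $\alpha=\sum_n\alpha_n e_n$; then $T^\lambda\alpha=\sum_n\alpha_n k_{-n,\lambda}$ and, applying $(-\partial_x-\lambda' I)$ term by term, one uses $k'_{-n,\lambda}+\lambda_{-n}k_{-n,\lambda}=-\overline{F_n}\varphi$ (equation \eqref{kn1} with $n\mapsto -n$, recalling $\lambda_{-n}=\lambda-\tfrac{2i\pi n}{L}$ and $\lambda'=\lambda+\mu$) to rewrite $-k'_{-n,\lambda}-\lambda' k_{-n,\lambda} = \overline{F_n}\varphi + (\lambda_{-n}-\lambda')k_{-n,\lambda} = \overline{F_n}\varphi - \big(\mu+\tfrac{2i\pi n}{L}\big)k_{-n,\lambda}$. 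Summing against $\alpha_n$ gives $(-\partial_x-\lambda')T^\lambda\alpha = \langle\alpha,F\rangle\varphi - \mu T^\lambda\alpha - \sum_n \tfrac{2i\pi n}{L}\alpha_n k_{-n,\lambda}$, using that $\sum_n\overline{F_n}\alpha_n=\langle\alpha,F\rangle$ — which is legitimate precisely because $\alpha\in D_m\subset\tau^\varphi(H^{m+1}_{(pw)})\cap H^m_{per}$, the domain on which $F$ is defined by Proposition \ref{FeedbackReg}.

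**Matching the left-hand side.** On the other side, $T^\lambda(-\partial_x-\mu I+\langle\cdot,F\rangle\varphi)\alpha$: the term $-\mu T^\lambda\alpha$ matches immediately, and $\langle\alpha,F\rangle T^\lambda\varphi$ should match $\langle\alpha,F\rangle\varphi$ via the weak $TB=B$ identity \eqref{WeakTB} — here I must be careful since $\varphi\notin H^m_{per}$, so $T^\lambda\varphi$ is only defined as the limit of $T^\lambda\varphi^{(N)}$, and \eqref{WeakTB} says $\langle T^\lambda\varphi^{(N)},e_p\rangle\to\varphi_p$ for each $p$; I would phrase the whole identity \eqref{OpEq} tested against $e_p$ to make this rigorous. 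It then remains to see that $T^\lambda(-\partial_x\alpha)$ equals $-\sum_n\tfrac{2i\pi n}{L}\alpha_n k_{-n,\lambda}$: writing $-\partial_x\alpha=-\sum_n\tfrac{2i\pi n}{L}\alpha_n e_n$ and applying the definition \eqref{TFeedback} of $T^\lambda$ to this function (which lies in $H^{m-1}_{per}$, or rather one restricts attention to $\alpha$ for which $-\alpha_x-\mu\alpha+\langle\alpha,F\rangle\varphi\in H^m_{per}$, the defining condition of $D_m$) gives exactly $-\sum_n\tfrac{2i\pi n}{L}\alpha_n k_{-n,\lambda}$, since $T^\lambda$ sends the $n$-th Fourier mode to $k_{-n,\lambda}$.

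**The main obstacle.** The delicate point is not the algebra but the justification of term-by-term differentiation and summation: one is manipulating series like $\sum_n\alpha_n k_{-n,\lambda}$ and differentiating under the sum, and the individual pieces ($\langle\alpha,F\rangle\varphi$, $\partial_x\alpha$, etc.) are only barely in the right spaces — indeed $\varphi\notin H^m_{per}$ and $F$ is unbounded on $H^m_{per}$. The clean way around this is to test \eqref{OpEq} against each $e_p$ and reduce everything to scalar identities among Fourier coefficients: the $p$-th coefficient of $(-\partial_x-\lambda')T^\lambda\alpha$ is $(-\lambda_p^{\,\prime\text{-shift}})$ times... concretely $-\big(\lambda'+\tfrac{2i\pi p}{L}\big)\langle T^\lambda\alpha,e_p\rangle$, while $\langle k_{-n,\lambda},e_p\rangle = -\overline{F_n}\varphi_p/\lambda_{-n+p}$, so both sides become explicit convergent numerical series in $n$, and the identity \eqref{ElementaryCoeff} closes the computation. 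The domain condition defining $D_m$ is exactly what guarantees $-\partial_x\alpha-\mu\alpha+\langle\alpha,F\rangle\varphi\in H^m_{per}$ so that $T^\lambda$ may be applied to it and the resulting element of $H^m_{per}$ is what appears on the left; I expect the bulk of the write-up to be this bookkeeping, with the weak $TB=B$ limit \eqref{WeakTB} and the Dirichlet-type convergence being the only genuinely analytic inputs.
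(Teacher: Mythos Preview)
Your approach is essentially the paper's: test \eqref{OpEq} against each $e_p$, use the kernel ODE \eqref{kn1} for the $k_{-n,\lambda}$, and invoke the weak $TB=B$ identity \eqref{WeakTB}. The algebra you sketch is correct, and once you carry it out coefficient-by-coefficient (using $\tfrac{2i\pi n}{L}=\lambda_p-\lambda_{-n+p}$) everything reduces to the single analytic claim
\[
\lim_{N\to\infty}\sum_{|n|\le N}\overline{F_n}\,\alpha_n \;=\; \langle\alpha,F\rangle,
\]
which is precisely the statement $\langle\alpha-\alpha^{(N)},F\rangle\to 0$ appearing in the paper's proof. You correctly flag this as ``the main obstacle'' but then dismiss it, writing that it is ``legitimate precisely because $\alpha\in D_m\subset\tau^\varphi(H^{m+1}_{(pw)})\cap H^m_{per}$, the domain on which $F$ is defined by Proposition~\ref{FeedbackReg}.'' This is the gap.

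Proposition~\ref{FeedbackReg} defines $\langle\alpha,h\rangle$ for $\alpha\in\tau^\varphi(H^{m+1}_{(pw)})$ by the closed-form expression \eqref{SingularPart}, \emph{not} as the limit of the symmetric partial sums $\sum_{|n|\le N}\overline{h_n}\alpha_n$; that limiting formula is established only for $\alpha\in\tau^\varphi(H^{m+2}_{(pw)})$ via Dirichlet's theorem, and $h$ is then extended by density in $\|\cdot\|_{m+1,pw}$. But the partial Fourier sums $\alpha^{(N)}$ of a general $\alpha\in\tau^\varphi(H^{m+1}_{(pw)})$ do \emph{not} converge to $\alpha$ in $\|\cdot\|_{m+1,pw}$ (Fourier truncation need not converge in piecewise $H^{m+1}$ when $\partial^{m-1}\alpha$ has jumps), so knowing that $\alpha$ lies in the domain of $F$ does not by itself give the limit. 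The paper uses the \emph{full} condition defining $D_m$---namely $-\alpha_x-\mu\alpha+\langle\alpha,F\rangle\varphi\in H^m_{per}$---together with a direct computation showing $\tilde\tau^\varphi\varphi\in H^m_{per}$ to deduce $\tilde\tau^\varphi\alpha_x\in H^m_{per}$; it then rewrites $\langle\alpha^{(N)},h\rangle$ as $\sqrt{L}\,\tfrac{K(\lambda)}{2}\,\partial^{m-1}(\tilde\tau^\varphi\alpha_x^{(N)})(0)$ and uses the continuity of $f\mapsto\partial^{m-1}f(0)$ on $H^m_{per}$. Some argument of this kind is genuinely needed here; your proposal does not supply it.
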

\begin{proof}
First let us rewrite \eqref{OpEq} in terms of $\lambda$:
$$ T^{\lambda} (-\partial_x + \langle  \cdot, F \rangle \varphi) \alpha = (-\partial_x - \lambda I)T^{\lambda} \alpha \quad \textrm{in} \ H^{m}_{per}, \quad \forall \alpha \in D_{m}(F).$$

Let $\alpha \in D_{m}$. By definition of the domain $D_m$, the left-hand side of \eqref{OpEq} is a function of $H^m_{per} \subset \dist$, and by construction of $T^{\lambda}$, the right-hand side of \eqref{OpEq} is a function of $H^{m-1}_{per} \subset \dist$.  To prove that these functions are equal, it is thus sufficient to prove their equality in $\dist$. Let us then write each term of the equality against $e_n$ for $n\in \Z$. One has
 
{\def\arraystretch{3}$$\begin{array}{rcl}
\left\langle (-\partial_x - \lambda I)T^{\lambda} \alpha, e_n \right\rangle &=& \left\langle T^{\lambda}\alpha, \frac{2i \pi n}L e_n \right\rangle -\lambda \left\langle T^{\lambda} \alpha , e_n \right\rangle\\
&=& -\lambda_n \langle T^{\lambda}\alpha, e_n \rangle.\\

\end{array}$$}

Let us now prove that
\begin{equation}\label{weakOpEq}
 \langle T^{\lambda} (-\alpha_x + \langle \alpha, F \rangle \varphi), e_n\rangle = -\lambda_n \langle T^{\lambda}\alpha, e_n \rangle, \quad \forall n\in \Z.\end{equation}

Now, as we only have $\alpha_x \in H^{m-1}_{per}$, $T^{\lambda}\alpha_x$ is not defined \textit{a priori}. In order to allow for more computations, let us define
$$\begin{aligned}
 \alpha^{(N)}:= \sum_{n=-N}^N \alpha_n e_n,& \quad \forall N \in \NN , \\
\varphi^{(N)}:= \sum_{n=-N}^N \varphi_n e_n,& \end{aligned}$$
so that we have, by property of the partial Fourier sum of a $H^m_{per}$ function,
$$-\alpha^{(N)}_x + \langle \alpha, F \rangle \varphi^{(N)} \xrightarrow[N \to \infty]{H^m} -\alpha_x + \langle \alpha, F \rangle \varphi,$$ 
so that in particular,
\begin{equation}\label{weakOpEq1}\langle T^{\lambda} (-\alpha^{(N)}_x + \langle \alpha, F \rangle \varphi^{(N)}), e_n\rangle \xrightarrow[N \to \infty]{}  \langle T^{\lambda} (-\alpha_x + \langle \alpha, F \rangle \varphi), e_n\rangle \end{equation}

Let $N \in \NN$. We can now write
{\def\arraystretch{2}$$\begin{array}{rcl}
\langle T^{\lambda} (-\alpha^{(N)}_x + \langle \alpha, F \rangle \varphi^{(N)}), e_n\rangle & = & -\langle T^{\lambda} \alpha^{(N)}_x , e_n\rangle + \langle \alpha, F \rangle \langle T^{\lambda}\varphi^{(N)}, e_n\rangle \\
&=& -\left\langle \sum_{p=-N}^N \frac{2i\pi p}L \alpha_p k_{-p, \lambda}, e_n  \right\rangle + \langle \alpha, F \rangle \langle T^{\lambda}\varphi^{(N)}, e_n\rangle. 
\end{array}$$}

Now, using \eqref{kn1}, we get
$$\frac{2i\pi p}L k_{-p, \lambda} = (k_{-p, \lambda})_x + \lambda k_{-p, \lambda} + \overline{F_p}\varphi,$$
so that
$$-T^{\lambda} \alpha^{(N)}_x = \sum_{p=-N}^N \alpha_p \left((k_{-p, \lambda})_x  +\lambda k_{-p, \lambda} +\overline{F_p}\varphi\right).$$
Hence
$$-\langle T^{\lambda} \alpha^{(N)}_x , e_n\rangle = - \left\langle \left(T^{\lambda} \alpha^{(N)}\right)_x, e_n \right\rangle - \lambda \left\langle T^\lambda \alpha^{(N)}, e_n \right\rangle - \langle \alpha^{(N)}, F \rangle \varphi_n,$$
and finally,
\begin{equation}\label{weakOpEq2}
\begin{aligned}\langle T^{\lambda} (-\alpha^{(N)}_x + \langle \alpha, F \rangle \varphi^{(N)}), e_n\rangle =
-\lambda_n \left\langle T^{\lambda} \alpha^{(N)}, e_n \right\rangle  &+ \left( \langle \alpha - \alpha^{(N)} , F \rangle \right)\varphi_n \\
&+ \langle \alpha, F \rangle \left(\left\langle T^{\lambda}\varphi^{(N)}- \varphi, e_n \right\rangle \right).
\end{aligned}
\end{equation}
To deal with the third term of the right-hand side of this equality, recall that we have chosen a feedback law so that the $TB=B$ condition \eqref{TB} holds. Thus,
\begin{equation}\label{OpEqTerm1}\left\langle T^{\lambda}\varphi^{(N)}- \varphi, e_n \right\rangle \xrightarrow[N \to \infty]{} 0.\end{equation}
To deal with the second term, recall that $F$ is the sum of a regular part $\tilde{F}$ and a singular part $h$:
$$\langle \alpha - \alpha^{(N)} , F \rangle = \left\langle \alpha - \alpha^{(N)}, \tilde{F} \right\rangle +  \langle\alpha - \alpha^{(N)}, h \rangle.$$
Now, by definition of $\alpha^{(N)}$ and continuity of $\tilde{F}$ for $\norm_{m}$, 
\begin{equation}\label{OpEqTerm21}\left\langle \alpha - \alpha^{(N)}, \tilde{F} \right\rangle \xrightarrow[N\to \infty]{} 0.\end{equation}
\color{black}On the other hand, for all $N \in \NN$, 
{\def\arraystretch{3}\begin{equation}\label{hOtherForm}\begin{array}{rcl}
\langle \alpha^{(N)}, h \rangle &=& K(\lambda) \sum_{n=-N}^{N} \frac{\alpha_n}{\tau^\varphi_n} \left(\frac{2i \pi n}{L}\right)^m \\
&=& \frac{K(\lambda)}2 \sum_{n=-N}^{N} \left(\frac{\alpha_n}{\tau^\varphi_n} + (-1)^{m} \frac{\alpha_{-n}}{\tau^\varphi_{-n}}\right) \left(\frac{2i \pi n}{L}\right)^m. \\
&=&\sqrt{L}\frac{K(\lambda)}2 \partial^{m-1} \tilde{\tau}^\varphi \alpha^{(N)}_x (0) ,
\end{array}\end{equation}}
where
$$\tilde{\tau}^\varphi f = \sum_{n\in \Z} \left(\frac{f_n}{\tau^\varphi_n} + (-1)^{m-1} \frac{f_{-n}}{\tau^\varphi_{-n}}\right) e_n, \quad \forall f \in L^2.$$
Now, notice that, by definition of $\tau^\varphi$ and $D_m$, 
\begin{equation}\label{RegA+BK}
\tilde{\tau}^\varphi \left(-\alpha_x - \mu \alpha +\langle \alpha, F \rangle \varphi\right) \in H^m_{per}.\end{equation}

Moreover, using \eqref{DAphi}, we have for $n \in \Z^\ast$:
{\def\arraystretch{3}$$\begin{array}{rcl}
\frac{\varphi_n}{\tau^\varphi_n} + (-1)^{m-1} \frac{\varphi_{-n}}{\overline{\tau^\varphi_{-n}}} &=& \frac{\varphi_n}{\tau^\varphi_n} + (-1)^{m-1} \overline{\frac{\varphi_{n}}{\overline{\tau^\varphi_n}}} \\
&=& \frac{-1-(-1)^{m-1}(-1)^m }{\left(\frac{2i\pi}{L}n\right)^m} + \frac{r_n}{\left(\frac{2i\pi}{L}n\right)^m} \\
&=& \frac{r_n}{\left(\frac{2i\pi}{L}n\right)^m},
\end{array}$$}
where $r_n \in \ell^2$. Hence, $\tilde{\tau}^\varphi \varphi \in H^m_{per}$. This, together with \eqref{RegA+BK}, yields
$$\tilde{\tau}^\varphi \alpha_x  \in H^m_{per}.$$
This implies that 
$$\tilde{\tau}^\varphi \alpha^{(N)}_x \xrightarrow[N\to \infty]{H^{m}} \tilde{\tau}^\varphi\alpha_x,$$
as $\tilde{\tau}^\varphi \alpha^{(N)}_x$ is the partial sum of $\tilde{\tau}^\varphi \alpha_x$.

Hence, by continuity of $\alpha \mapsto \partial^{m-1} \alpha(0)$ for $\norm_{m}$, \eqref{hOtherForm} implies that
\begin{equation}\label{OpEqTerm22}\left\langle \alpha - \alpha^{(N)}, h \right\rangle \xrightarrow[N\to \infty]{} 0.\end{equation}

Finally, \eqref{weakOpEq2}, \eqref{OpEqTerm1}, \eqref{OpEqTerm21}, \eqref{OpEqTerm22}, and the continuity of $T^\lambda$ yield
$$\langle T^{\lambda} (-\alpha^{(N)}_x + \langle \alpha, F \rangle \varphi^{(N)}), e_n\rangle \xrightarrow[N\to \infty]{} -\lambda_n \left\langle T^\lambda \alpha, e_n \right\rangle.$$
This, put together with \eqref{weakOpEq1}, gives \eqref{weakOpEq} by uniqueness of the limit, which in turn proves \eqref{OpEq}.\color{black}

\end{proof}
 \begin{rmrk}
 When $\varphi \in H^m$, $\tau^\varphi$ is simply {\color{black}$(1/\sqrt{L})(\partial^{m-1} \varphi(L)-\partial^{m-1} \varphi(0)) Id$, $F$ is defined on $H^{m+1}\cap H^m_{per}$, and $\tilde{\tau}^\varphi \alpha$ is simply, up to a constant factor, the symmetrisation $\alpha + (-1)^{m-1} \alpha(L-\cdot)$}, which is $H^m_{per}$ if $\alpha\in H^m\cap H^{m-1}_{per}$.
 \end{rmrk}

\subsection{Well-posedness of the closed-loop system}

The operator equality we have established in the previous section means that $T^{\lambda}$ transforms, if they exist, solutions of the closed-loop system with a well-chosen feedback into solutions of the target system. Let us now check that the closed-loop system in question is indeed well-posed {\color{black}in some sense.

\begin{prpstn}
The operator $A+BK:=-\partial_x -\mu \alpha + \langle  \cdot, F \rangle \varphi$ defined on $D_{m}$ is a dense restriction of the infinitesimal generator of a $C^0$-semigroup on $H^{m}_{per}$.
\end{prpstn}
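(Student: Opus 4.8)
The plan is to deduce the statement from the operator equality of Proposition~\ref{OpEqProp} by transporting the generator of the target semigroup through the isomorphism $T^{\lambda}$.

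First I would record that the damped transport operator $\mathcal{A}_0:=-\partial_x-\lambda^\prime I$ with domain $\mathcal{D}(\mathcal{A}_0):=H^{m+1}_{per}$ is the infinitesimal generator of a $C^0$-semigroup $(S(t))_{t\geq 0}$ on $H^m_{per}$. Indeed, as recalled after \eqref{Target1}, $S(t)\alpha_0:=e^{-\lambda^\prime t}\alpha_0(\cdot-t)$ (using the $L$-periodic continuation of $\alpha_0$), which in Fourier coefficients multiplies $\alpha_n$ by the unimodular factor $e^{-\lambda^\prime t}e^{-2i\pi n t/L}$ up to $e^{-\lambda^\prime t}$; hence $\|S(t)\alpha_0\|_m=e^{-\lambda^\prime t}\|\alpha_0\|_m$, the semigroup law and strong continuity are immediate, and a direct computation of $\lim_{t\to 0^+}(S(t)\alpha_0-\alpha_0)/t$ identifies the generator as $\mathcal{A}_0$ with domain exactly $H^{m+1}_{per}$.

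Next I would define $\mathcal{A}:=(T^{\lambda})^{-1}\mathcal{A}_0 T^{\lambda}$ on $\mathcal{D}(\mathcal{A}):=(T^{\lambda})^{-1}(H^{m+1}_{per})$. Since $T^{\lambda}\colon H^m_{per}\to H^m_{per}$ is an isomorphism by Corollary~\ref{TInvert}, the family $T(t):=(T^{\lambda})^{-1}S(t)T^{\lambda}$ is a $C^0$-semigroup on $H^m_{per}$ whose generator is precisely $\mathcal{A}$ (standard similarity/conjugation of $C^0$-semigroups). It then remains to check that $A+BK$ on $D_m$ is a restriction of $\mathcal{A}$, and that $D_m$ is dense. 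For the restriction: let $\alpha\in D_m$. By definition of $D_m$ one has $(A+BK)\alpha\in H^m_{per}$, hence $T^{\lambda}\bigl((A+BK)\alpha\bigr)\in H^m_{per}$ since $T^{\lambda}$ maps $H^m_{per}$ into itself; the operator equality \eqref{OpEq} gives $(-\partial_x-\lambda^\prime I)T^{\lambda}\alpha=T^{\lambda}\bigl((A+BK)\alpha\bigr)\in H^m_{per}$, and since moreover $T^{\lambda}\alpha\in H^m_{per}$ we conclude $\partial_x(T^{\lambda}\alpha)\in H^m_{per}$, so $T^{\lambda}\alpha\in H^{m+1}$, and matching the traces up to order $m$ at $0$ and $L$, $T^{\lambda}\alpha\in H^{m+1}_{per}=\mathcal{D}(\mathcal{A}_0)$. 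Thus $\alpha\in\mathcal{D}(\mathcal{A})$ and, again by \eqref{OpEq}, $\mathcal{A}\alpha=(T^{\lambda})^{-1}\mathcal{A}_0 T^{\lambda}\alpha=(A+BK)\alpha$. Finally, the density of $D_m$ in $H^m_{per}$ for $\|\cdot\|_m$ is exactly Proposition~\ref{Dense}. Hence $A+BK$ on $D_m$ is a dense restriction of the generator $\mathcal{A}$ of the $C^0$-semigroup $(T(t))_{t\geq0}$, which is the claim.

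\textbf{Main obstacle.} The one point requiring genuine care is the implication ``$T^{\lambda}\alpha\in H^m_{per}$ together with $\partial_x(T^{\lambda}\alpha)\in H^m_{per}$ imply $T^{\lambda}\alpha\in H^{m+1}_{per}$'': one must verify that the trace conditions defining $H^{m+1}_{per}$ (periodicity of derivatives up to order $m$) are produced by combining the $H^m_{per}$ membership of $T^{\lambda}\alpha$ (periodicity up to order $m-1$) with that of its derivative (periodicity of orders $1,\dots,m$). Everything else is a routine application of the similarity transform for $C^0$-semigroups and of the already established Corollary~\ref{TInvert}, Proposition~\ref{OpEqProp}, and Proposition~\ref{Dense}.
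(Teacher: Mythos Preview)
Your proposal is correct and follows essentially the same route as the paper: conjugate the target semigroup $S_{\lambda^\prime}(t)$ by the isomorphism $T^{\lambda}$, identify the generator of the conjugated semigroup by the standard similarity argument, and then invoke \eqref{OpEq} together with Proposition~\ref{Dense} to conclude that $A+BK$ on $D_m$ is a dense restriction of that generator. The point you single out as the ``main obstacle'' is in fact immediate from the Fourier-series definition of $H^s_{per}$ adopted in the paper: if $f\in H^m_{per}$ and $f'=\sum_{n}(2i\pi n/L)f_n e_n\in H^m_{per}$, then $\sum_{n}\bigl(1+|2\pi n/L|^{2(m+1)}\bigr)|f_n|^2<\infty$ follows directly, so $f\in H^{m+1}_{per}$ without any separate verification of boundary traces.
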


\begin{proof}
We know from Lemma \ref{Dense} that $A+BK$ is densely defined on $D_m \subset H^m_{per}$.

Now, define the following semigroup on $H^m_{per}$:
\begin{equation}\label{target-semigroup}
    S_{\lambda^\prime}(t)\alpha:=e^{-\lambda^\prime t}\alpha(\cdot-t), \quad \forall \alpha \in H^m_{per}, \quad t\geq 0,
\end{equation}
which corresponds to the target system \eqref{Target1}. Its infinitesimal is given by
\begin{equation}\label{target-infinitesimal-gen}
    \begin{array}{c}
    D^{\lambda^\prime}:=H^{m+1}_per, \\
    -\partial_x - \lambda^\prime I.
    \end{array}
\end{equation}
Now, define a second semigroup on $H^m_{per}$:
\begin{equation}\label{closed-loop-semi-group}
    S(t)\alpha:=(T^\lambda)^{-1} S_{\lambda^\prime}(t)T^\lambda \alpha, \quad \forall \alpha \in H^m_{per}, \quad t\geq 0.
\end{equation}
The infinitesimal generator of $S(t)$ is given, when it exists, by the limit of
\begin{equation}
    \frac{S(t)\alpha - \alpha}t = (T^\lambda)^{-1}\frac{S_{\lambda^\prime}(t)T^\lambda\alpha-T^\lambda\alpha}t,
\end{equation}
so, by \eqref{target-infinitesimal-gen}, the domain of the infinitesimal generator of $S(t)$ is $(T^\lambda)^{-1}(H^{m+1}_{per})$, and the infinitesimal generator itself is given by
\begin{equation}
    \frac{S(t)\alpha - \alpha}t \xrightarrow[t\to 0^+]{H^m} (T^\lambda)^{-1}(-\partial_x - \lambda^\prime I)T^\lambda\alpha.
\end{equation}
In particular, by \eqref{OpEq},
\begin{equation}
    (T^\lambda)^{-1}(-\partial_x - \lambda^\prime I)T^\lambda\alpha = (-\partial_x - \mu I +\langle \cdot, F \rangle \varphi)\alpha=(A+BK)\alpha,
\end{equation}
which proves the proposition.
\end{proof}

\subsection{Stability of the closed-loop system}

We can now prove Theorem \ref{MainResult}.

Let $S(t)$ the semigroup defined by \eqref{closed-loop-semi-group}, $\alpha \in H^m_{per}$.

By definition of $S(t)$, and using \eqref{NormsFeedback}, we then get, for $t \geq 0$,
{\def\arraystretch{1.5}$$\begin{array}{rcl}
 \| S(t)\alpha \|_{m} &\leq& \vertiii {(T^{\lambda})^{-1}}\|S_{\lambda^\prime}(t)T^{\lambda} \alpha \|_{m}  \\
&\leq & \vertiii{(T^{\lambda})^{-1}}\  e^{-\lambda^\prime t} \|T^{\lambda}\alpha\|_{m}\\
&\leq&  \vertiii{(T^{\lambda})^{-1}} \vertiii{T^{\lambda}} e^{-\lambda^\prime t} \|\alpha\|_{m} \\
&\leq &  \left(\frac{C}{c}\right)^2 e^{\lambda L }e^{-\lambda^\prime t} \|\alpha\|_{m} ,
\end{array}$$} 
which proves the exponential stability of the semigroup $S(t)$.

\ 

Now consider the particular case where $C=c>0$, and $\mu=0$ to simplify notations, together with: 
\begin{equation}
    \label{critical-ex-phi}
\varphi_n:=\frac{C}{\sqrt{1+\left|\frac{2i \pi n}L\right|^{2m}}}, \quad \forall n\in \Z,
\end{equation}
so that
\begin{equation}
    \label{critical-ex-norms}
   \|\alpha \star \varphi\|_m=C\|\alpha\|, \ \forall \alpha \in L^2, \quad \|\alpha \star F \|=\frac1C \|\alpha\|_m, \ \forall \alpha \in H^m_{per}.
\end{equation}
Now let $\varepsilon >0$. Keeping in mind that $(\chi_{[0,1/n]})_{n> 0}$ and $(\chi_{[L-1/n, L]})_{n> 0}$ are maximizing sequences for $\Lambda^\lambda$ and $\left(\Lambda^\lambda\right)^{-1}$ respectively, and using \eqref{T}, \eqref{Feedback}, \eqref{target-semigroup}, we get for $t_n:=L-1/n$:
\begin{equation}
\begin{aligned}
    S(t_n)(\chi_{[0,1/n]} \star \varphi )&= (T^{\lambda})^{-1}S_{\lambda^\prime}(t_n)T^{\lambda}(\chi_{[0,1/n]} \star \varphi ) \\
    &=-K(\lambda)(T^{\lambda})^{-1}S_{\lambda^\prime}\left(t_n\right) \left(\varphi \star \left(\Lambda^\lambda (\chi_{[0, 1/n]})\right)\right) \\
    &=-K(\lambda)\frac{e^{-\lambda t_n} \sqrt{L}}{1-e^{-\lambda L}}(T^{\lambda})^{-1} \varphi \star \left(\chi_{[L-1/n, L]}e^{-\lambda\left(\cdot-t_n\right)}\right) \\
    &=-e^{-\lambda t_n} e^{\lambda(L-1/n)}(\chi_{[L-1/n, L]} \star \varphi )\\
    &=-e^{-\lambda t_n} e^{\lambda(L-1/n)}(\chi_{[0, 1/n]} \star \varphi )(\cdot-t_n), \quad \forall n>0,
    \end{aligned}
\end{equation}
so that 
\begin{equation}
    \left\| S\left(t_n\right)(\chi_{[0,1/n]} \star \varphi )\right\|_m=e^{-\lambda t_n} e^{\lambda(L-1/n)} \|\chi_{[0,1/n]} \star \varphi\|_m, \quad \forall n>0.
\end{equation}
Then, there exists $n>0$ such that 
\begin{equation}
     \left\| S\left(t_n\right)(\chi_{[0,1/n]} \star \varphi )\right\|_m>e^{-\lambda t_n} (e^{\lambda L}-\varepsilon)\|\chi_{[0,1/n]} \star \varphi\|_m.
\end{equation}
This shows that estimate \eqref{exp-stab-estimate} can be critical in some cases.
}

\subsection{Application}\label{section-application}
Let $m=1,\ \lambda >0$, and let us suppose, to simplify the computations, that $a\equiv 0$. Define
\begin{equation}\label{phiEx}
 \varphi(x)=L-x,  \quad \forall x \in (0, L),
\end{equation}
so that $\varphi\in H^1$ but is not periodic, and satisfies \eqref{phiGrowth}, with
$$\begin{aligned}
 \varphi_n=-\frac{i L^{\frac32}}{2\pi n},& \quad \forall n \in \Z^\ast, \\
 \varphi_0= \frac{L^{\frac32}}2.&
\end{aligned}$$
Then, 
\begin{equation}\label{Fex}
 \langle \alpha, F \rangle = -\frac{2K(\lambda)}{L^{\frac32}} \alpha_0 - K(\lambda) \frac{\alpha_x(0)+\alpha_x(L)}{2}, \quad \forall \alpha \in H^{2} \cap H^1_{per},
\end{equation}
and 
$$D_1=\left\{\alpha \in H^2 \cap H^1_{per}, \quad \frac{2K(\lambda)}{L^{\frac32}} \alpha_0 + \left(\frac1L - K(\lambda)\right)\alpha_x(0)-\frac1L \alpha_x (L)=0 \right\},$$
so that 
\begin{equation}\label{Systemex}
\left\{\begin{aligned}
\alpha_t + \alpha_x &=  \left(-\frac{2K(\lambda)}{L^{\frac32}} \alpha_0 - K(\lambda) \frac{\alpha_x(0)+\alpha_x(L)}{2} \right)\varphi (x), \ x\in [0,L], \\
\alpha(t,0) & = \alpha(t,L), \ \forall t \geq 0,
\end{aligned}\right.
\end{equation}

\noindent has a unique solution for initial conditions in $D_1$.

The backstepping transformation can be written as:
\begin{equation}\label{Tex}
 T^\lambda\alpha = \frac{\sqrt{L}}{1-e^{-\lambda L}}\left(e^{-\lambda x}\left(-\frac{K(\lambda)}{\sqrt{L}} \alpha_x -  \frac{2K(\lambda)}{L^2} \alpha_0\right)\right) \star \varphi , \quad \forall \alpha \in H^1_{per}.
\end{equation}
Let $\alpha(t) \in D_1$ be the solution of the closed loop system \eqref{Systemex} with initial condition $\alpha^0 \in D_1$, and let us note $z(t):=T^\lambda \alpha(t)$, then
{\def\arraystretch{2}$$\begin{array}{rcl}
z_t &=& \frac{\sqrt{L}}{1-e^{-\lambda L}}\left(e^{-\lambda x}\left(-\frac{K(\lambda)}{\sqrt{L}} \alpha_{xt} -  \frac{2K(\lambda)}{L^2} \alpha_0^\prime\right)\right) \star \varphi. \\
&=& \frac{\sqrt{L}}{1-e^{-\lambda L}}\left(e^{-\lambda x}\left(-\frac{K(\lambda)}{\sqrt{L}} (-\alpha_{xx}+ \langle \alpha, F \rangle \varphi_x) -  \frac{2K(\lambda)}{L^2} \alpha_0^\prime\right)\right) \star \varphi. \\
&=& \frac{\sqrt{L}}{1-e^{-\lambda L}}\left(e^{-\lambda x}\left(-\frac{K(\lambda)}{\sqrt{L}} (-\alpha_{xx}- \langle \alpha, F \rangle) -  \frac{2K(\lambda)}{L^2} \alpha_0^\prime\right)\right) \star \varphi. \\
z_x&=& \frac{\sqrt{L}}{1-e^{-\lambda L}}\left(-e^{-\lambda x}\frac{K(\lambda)}{\sqrt{L}} \alpha_{xx} \right) \star \varphi - \lambda z \\
z_t+z_x + \lambda z &=& \frac{\sqrt{L}}{1-e^{-\lambda L}}\left(e^{-\lambda x}\left(\frac{K(\lambda)}{\sqrt{L}} \langle \alpha, F \rangle -  \frac{2K(\lambda)}{L^2} \alpha_0^\prime\right)\right) \star \varphi.
\end{array}$$}
By projecting the closed loop system on $e_0$, we get
$$\alpha_0^\prime = \langle \alpha, F\rangle \varphi_0 = \langle \alpha, F \rangle \frac{L^{\frac32}}2$$
so that
$$z_t+z_x + \lambda z =0.$$
In particular, 
\begin{equation}\label{energyDissip}
\frac{d}{dt}\|z\|^2_1=-2\lambda\|z\|^2_1.\end{equation}
Let us now set
$$ V(\alpha):= \|z\|^2_1, \quad \forall \alpha \in H^1_{per}.$$

Now, notice that
{\def\arraystretch{3}$$\begin{array}{rcl}
\|T^\lambda \alpha \|^2_1&=& \frac{L}{(1-e^{-\lambda L})^2} \sum_{n\in \Z} \left(1+\left|\frac{2i\pi n}{L}\right|^2\right) |\varphi_n|^2 \left|\left\langle e^{-\lambda x}\left(-\frac{K(\lambda)}{\sqrt{L}} \alpha_x -  \frac{2K(\lambda)}{L^2} \alpha_0\right), e_n \right\rangle\right|^2 \\
&\geq& C  \left\| e^{-\lambda x}\left(-\frac{K(\lambda)}{\sqrt{L}} \alpha_x -  \frac{2K(\lambda)}{L^2} \alpha_0\right) \right\|^2 \\
&\geq& C e^{2\lambda L} \left\| -\frac{K(\lambda)}{\sqrt{L}} \alpha_x -  \frac{2K(\lambda)}{L^2} \alpha_0 \right\|^2 \\
&\geq& C^\prime K(\lambda)^2 e^{2\lambda L} \|\alpha\|_1^2.
\end{array}$$}
Together with \eqref{energyDissip}, this shows that $V$ is a Lyapunov function, and \eqref{Systemex} is exponentially stable.
\section{Further remarks and questions}

\subsection{Controllability and the $TB=B$ condition}
\

In the introduction we have mentioned that the growth constraint on the Fourier coefficients of $\varphi$ actually corresponds to the exact null controllability condition in some Sobolev space for the control system \eqref{System}. As we have mentioned in the finite dimensional example, the controllability condition is essential to solve the operator equation: in our case, formal computations lead to a family of functions that turns out to be a Riesz basis precisely thanks to that rate of growth. Moreover, that rate of growth is essential for the compatibility of the $TB=B$ condition and the invertibility of the backstepping transformation. Indeed, as the transformation is constructed formally using a formal $TB=B$ condition, that same $TB=B$ condition fixes the value of the coefficients of $F_n$, giving them the right rate of growth for $T^\lambda$ to be an isomorphism.

In that spirit, it would be interesting to investigate if a backstepping approach is still valid if the conditions on $\varphi$ are weakened. For example, if we suppose approximate controllability instead of exact controllability, i.e.
$$\varphi_n \neq 0, \quad \forall n \in \Z ,$$
then $F$ can still be defined using a weak $TB=B$ condition. However, it seems delicate to prove, in the same direct way as we have done, that $T^\lambda$ is an isomorphism, as we only get the completeness of the corresponding $\left(k_{n, \lambda}\right)$, but not the Riesz basis property. 

Finally, it should be noted that, while in \cite{Schrodinger} the $TB=B$ condition is well-defined, in our case, it only holds in a rather weak sense. This is probably because of a lack of regularization, indeed in \cite{Schrodinger} the backstepping transformation has nice properties, as it can be decomposed in Fredholm form, i.e. as the sum of a isomorphism and a compact operator. Accordingly, the Riesz basis in that case is quadratically close to the orthonormal basis given by the eigenvectors of the Laplacian operator. That is not the case for our backstepping transformation, as it is closely linked to the operator $\Lambda^\lambda$, which does not have any nice spectral properties.

Nonetheless, it appears that thanks to some information on the regularity of $F$, a weak sense is sufficient and allows us to prove the operator equality by convergence.

\subsection{Regularity of the feedback law}

As we have pointed out in Section \ref{section-feedback-reg}, if $\varphi$ is such that system \eqref{System} is controllable in $H^m_{per}$, then the feedback law $F$ defined by \eqref{Feedback} is continuous for $\norm_{m+1}$ but not for $\norm_{m}$. This was actually to be expected, as we have mentioned in the introduction that Shun Hua Sun proved that bounded feedback laws can only achieve ``compact'' perturbations of the spectrum, which is not enough to get exponential stabilization. More precisely, it would be possible to get exponential stabilization only with very singular controllers. With a distributed control such as ours, it is necessary to consider unbounded feedback laws.

Moreover, the application in Section \ref{section-application} shows that even though the feedback is not continuous, and is given by its Fourier coefficients, in practice it can be expressed quite simply for some controllers. 

\subsection{Null-controllability and finite-time stabilization}
\

As we have mentioned in the introduction, one of the advantages of the backstepping method is that it can provide an explicit expression for feedbacks, thus allowing the construction of explicit controls for null controllability, as well as time-varying feedbacks that stabilize the system in finite time $T>0$. 

The general strategy (as is done in \cite{CoronNguyen}, \cite{XiangKdVNull}) is to divide the interval $[0,T]$ in smaller intervals $[t_n, t_{n+1}]$, the length of which tends to $0$, and on which one applies feedbacks to get exponential stabilization with decay rates $\lambda_n$, with $\lambda_n \rightarrow \infty$. Then, for well-chosen $t_n, \lambda_n$, the trajectory thus obtained reaches $0$ in time $T$. Though this provides an explicit control to steer the system to $0$, the norm of the operators applied successively to obtain the control tends to infinity. As such, it does not provide a reasonably regular feedback. However, the previous construction of the control can be used, with some adequate modifications (see \cite{CoronNguyen} and \cite{XiangKdVFinite}) to design a time-varying, periodic feedback, with some regularity in the state variable, which stabilizes the system in finite time.

Let us first note that, due to the hyperbolic nature of the system, there is a minimal control time, and thus small-time stabilization cannot be expected. Moreover, even for $T>L$, the estimates we have established on the backstepping transforms prevent us from applying the strategy we have described above: indeed, for any sequences $(t_n) \rightarrow T$, $\lambda_n \rightarrow \infty$, we have
$$\|\alpha(t)\|_{m} \leq \prod_{k=0}^n \left(\frac{C}{c}\right)^{2n} e^{n\mu L} exp\left(\sum_{k=0}^n -\lambda_k(t_{k+1}-t_k -L)\right) \|\alpha_0\|_{m}, \quad \forall t \in [t_n, t_{n+1}], $$
where $c, C$ are the decay constants in \eqref{phiGrowth}.
Moreover, as $t_{k+1}-t_k \rightarrow 0$, we have
$$exp\left(\sum_{k=0}^n -\lambda_k(t_{k+1}-t_k -L)\right) \xrightarrow[n \to \infty]{} \infty.$$

Another approach could be to draw from \cite{CHO2} and apply a second transformation to design a more efficient feedback law. It would also be interesting to adapt the strategy in \cite{YuXuJiangGanesan}, inspired from \cite{KrsticHaraTsub}, to our setting.
\subsection{Nonlinear systems}
\

Finally, another prospect, having obtained explicit feedbacks that stabilize the linear system, is to investigate the stabilization of nonlinear transport equations. This has been done in \cite{CoronLu2}, where the authors show that the feedback law obtained for the linear Korteweg-de Vries equation also stabilize the nonlinear equation. However, as in \cite{Schrodinger}, the feedback law we have obtained is not continuous in the norm for which the system is stabilize. This would require some nonlinear modifications to the feedback law in order to stabilize the nonlinear system.

\section*{Acknowledgements}
This work is partially supported by ANR project Finite4SoS
(ANR-15-CE23-0007), and the French Corps des Mines. 

The author would like to thank Jean-Michel Coron, for bringing the problem to his attention, his constant support, and his valuable remarks, as well as Amaury Hayat and Shengquan Xiang for discussions on this problem.

\end{document}